\documentclass[11pt]{amsart}

\usepackage[dvipsnames]{xcolor}

\usepackage{CJKutf8}

\usepackage{ytableau,tikz,varwidth, hyperref, amssymb, mathscinet,mathtools, amsthm}
\usetikzlibrary{calc, decorations}
\usepackage[enableskew]{youngtab}
\usepackage[left=1in,right=1in, top=1in, bottom=1.25in]{geometry}
\usepackage{enumerate, multirow}
\usepackage[shortlabels]{enumitem}
\usepackage[all,cmtip]{xy}
\usepackage{caption}
\usepackage{diagbox}
\usepackage{subcaption}
\usepackage{tikz-cd}
\usepackage{mathrsfs}
\usepackage[normalem]{ulem} 

\newtheorem{thm}{Theorem}[section]

\usepackage[dvipsnames]{xcolor}

\usepackage[dvipsnames]{xcolor}

\theoremstyle{definition} 

\newtheorem{notation}[thm]{Notation}

\newcommand{\Agtropm}{A_g[m]^\trop}

\newcommand{\PD}{\mathrm{PD}}
\newcommand{\PDrt}{\mathrm{PD}^{\mathrm{rt}}}

\newcommand{\ov}{\overline}

\newcommand{\col}{\colon}

\newcommand{\hide}[1]{}
\newcommand{\ans}[1]{}

\usepackage[thinlines]{easytable}

\newcommand{\down}[2]{\xymatrix@R=6mm@C=2mm{
#1\ar[d]\\ #2
}}

\newcommand{\downlabel}[3]{\xymatrix@R=6mm@C=2mm{
{#1}\ar[d]^<<<{#3} \\ #2
}}
\newcommand{\squarediagrammapsto}[4]{\xymatrix@R=8mm@C=8mm{
#1\ar@{|->}[d]\ar@{|->}[r] & #2\ar@{|->}[d] \\ #3\ar@{|->}[r] &#4
}}
\newcommand{\squarediagramlabel}[8]{\xymatrix@R=8mm@C=8mm{
#1\ar[d]_{#6}\ar[r]^{#5} & #2\ar[d]^{#7} \\ #3\ar[r]^{#8} &#4
}}

\newcommand{\isocelesdown}[3]{\xymatrix@R=6mm@C=0mm{
& {#1}\ar[dl] \ar[dr] & \\
{#2} \ar[rr] && {#3}
}}

\newcommand{\isocelesdownlabel}[6]{\xymatrix@R=6mm@C=0mm{
& {#1}\ar[dl]_<<<<{#4} \ar[dr]^<<<<{#5} & \\
{#2} \ar[rr]_{#6} && {#3}
}}

\newcommand{\isocelesup}[3]{\xymatrix@R=6mm@C=0mm{
#1\ar[rr]\ar[dr] && #2\ar[dl] \\
& #3 &
}}

\newcommand{\isocelesuplabel}[6]{\xymatrix@R=6mm@C=0mm{
#1\ar[rr]^{{#4}} \ar[dr]_<<<{#5} && #2\ar[dl]^<<<{#6} \\
& #3 &
}}

\newtheorem{Definition}[thm]{Definition}
\newenvironment{definition}
{\begin{Definition}\rm}{\end{Definition}}

\newtheorem{Example}[thm]{Example}
\newenvironment{example}
{\begin{Example}\rm}{\end{Example}}

\newtheorem{Exercise}[thm]{Exercise}
\newenvironment{exercise}
{\begin{Exercise}\rm}{\end{Exercise}}

\newtheorem{Fact}[thm]{Fact}
\newenvironment{fact}
{\begin{Fact}\rm}{\end{Fact}}

\newtheorem{Theorem}[thm]{Theorem}
\newenvironment{theorem}
{\begin{Theorem}\rm}{\end{Theorem}}

\newtheorem{Lemma}[thm]{Lemma}
\newenvironment{lemma}
{\begin{Lemma}\rm}{\end{Lemma}}

\newtheorem{Remark}[thm]{Remark}
\newenvironment{remark}
{\begin{Remark}\rm}{\end{Remark}}

\newtheorem{Proposition}[thm]{Proposition}
\newenvironment{proposition}
{\begin{Proposition}\rm}{\end{Proposition}}

\newtheorem{Corollary}[thm]{Corollary}
\newenvironment{corollary}
{\begin{Corollary}\rm}{\end{Corollary}}

\newtheorem{Question}[thm]{Question}
\newenvironment{question}
{\begin{Question}\rm}{\end{Question}}

\newtheorem{Conjecture}[thm]{Conjecture}
\newenvironment{conjecture}
{\begin{Conjecture}\rm}{\end{Conjecture}}

\newtheorem{Problem}[thm]{Problem}

\newtheorem{Hypothesis}[thm]{Hypothesis}

\newtheorem{manualtheoreminner}{Theorem}
\newenvironment{manualtheorem}[1]{%
  \IfBlankTF{#1}
    {\renewcommand{\themanualtheoreminner}{\unskip}}
    {\renewcommand\themanualtheoreminner{#1}}%
  \manualtheoreminner
}{\endmanualtheoreminner}

\theoremstyle{remark}

\newcommand \enumnow[1]{\begin{enumerate}{#1}\end{enumerate}}

\newcommand{\on}{\operatorname}

\newcommand{\Aut}{\on{Aut}}

\newcommand{\GL}{\mathrm{GL}}
\newcommand{\Gr}{\operatorname{Gr}}

\DeclareMathOperator{\id}{id}

\newcommand{\Mat}{\on{Mat}}

\DeclareMathOperator{\rank}{rank}
\newcommand{\rat}{\mathrm{rt}}

\DeclareMathOperator{\rt}{rt}

\newcommand{\SL}{\mathrm{SL}}

\newcommand{\Sp}{\mathrm{Sp}}

\DeclareMathOperator{\St}{St}

\newcommand{\Sym}{\mathrm{Sym}}

\newcommand{\trop}{\mathrm{trop}}

\makeatletter
\newcommand*{\tp}{%
  {\mathpalette\@transpose{}}%
}
\newcommand*{\@transpose}[2]{%
  \raisebox{\depth}{$\m@th#1\intercal$}%
}
\makeatother

\newcommand{\FF}{\mathbb{F}}

\newcommand{\HH}{\mathbb{H}}

\newcommand{\PP}{\mathbb{P}}
\newcommand{\Q}{\mathbb{Q}}
\newcommand{\QQ}{\mathbb{Q}}
\newcommand{\R}{\mathbb{R}}
\newcommand{\RR}{\mathbb{R}}

\newcommand{\Z}{\mathbb{Z}}
\newcommand{\ZZ}{\mathbb{Z}}

\newcommand{\cA}{\mathcal{A}}

\newcommand{\cM}{\mathcal{M}}

\newcommand{\cP}{\mathcal{P}}

\newcommand{\cT}{\mathcal{T}}

\newcommand{\cW}{\mathcal{W}}

\usepackage{array}
\newcolumntype{M}[1]{>{\centering\arraybackslash}m{#1}}
\newcolumntype{N}{@{}m{0pt}@{}}

\title{Level structures on tropical 
abelian varieties}

\author[E. Assaf]{Eran Assaf}
\address{Department of Mathematics, Massachusetts Institute of Technology, Cambridge, MA 02139}
\email{eranasaf@mit.edu}

\author[M. Brandt]{Madeline Brandt}
 \address{Department of Mathematics, Vanderbilt University, Nashville, TN 37240}
\email{madeline.v.brandt@vanderbilt.edu}

\author[J. Bruce]{Juliette Bruce}
\address{Department of Mathematics, Dartmouth College, Hanover, NH 03755}
\email{juliette.bruce@dartmouth.edu}

\author[M. Chan]{Melody Chan}
\address{Department of Mathematics, Brown University, Providence, RI 02912}
\email{melody\_chan@brown.edu}

\author[R. Vlad]{Raluca Vlad}
\address{Department of Mathematics, Brown University, Providence, RI 02912}
\email{raluca\_vlad@brown.edu}

\begin{document}

\begin{abstract}
We introduce level structures on tropical abelian varieties and give a modular interpretation of $A_g[m]^\trop$, the tropicalization of the moduli space $\cA_g[m]$ of principally polarized abelian varieties with level $m$ structure. We study the case of abelian surfaces in greater depth. The link of $A_2[m]^\trop$ is an explicit simplicial complex whose vertices are the primitive vectors of $(\ZZ/m\ZZ)^4$ up to sign. As a topological space, this link is homotopic to a wedge sum of closed orientable surfaces and circles; we compute the number of each of these and the genera of the surfaces. We  deduce the weight zero compactly supported rational cohomology of $\cA_2[m]$, completing a calculation of Oda--Schwermer from 1990.
\end{abstract}

\maketitle

\section{Introduction}

We study the tropicalization $A_g[m]^\trop$ of the moduli space of principally polarized abelian varieties of dimension $g$ with level structure. The definition of $A_g[m]^\trop$ relies on the theory of tropicalizations of toroidal compactifications of locally symmetric varieties which were studied in great generality in~\cite{ABBCV} following~\cite{amrt, kkmsd}. Specifically, $A_g[m]^\trop$ is a topological space homeomorphic to the cone over the dual complex of the boundary of \emph{any} toroidal compactification of the moduli space $\cA_g[m]$ of principally polarized $g$-dimensional abelian varieties with level structure~$m$. 

We introduce the notion of level structures on principally polarized tropical abelian varieties, which leads to a tropical modular interpretation for $A_g[m]^\trop$ in Section~\ref{subsection-tropical-modular-interpretation}. This result is in the spirit of \cite{acp}, where the boundary complex of the Deligne--Mumford compactification $\cM_{g,n}\subset \ov \cM_{g,n}$ of the moduli of marked algebraic curves is shown to be a tropical moduli space of graphs.
Our result is a generalization of \cite{bmv}, which interprets $A_g^{\trop}$ as a moduli space of principally polarized tropical abelian varieties \cite{MZ08}. As one would expect, our definition recovers the original definition when $m=1$ (Remark~\ref{rem:same as bmv}).

Following~\cite[Section 4.1]{ABBCV}, we give a completely linear algebraic definition of $A_g[m]^\trop$. The space $A_g[m]^\trop$ is built out of polyhedral fan decompositions of the rational closures $\PD(W^\vee)^{\rt}$ of the cones of positive definite forms on $W^\vee_\R$, as $W$ ranges over all isotropic subspaces of $\Z^{2g}$.

These polyhedral fans are compatible under identifications induced from the congruence subgroup $\Gamma[m] = \Sp_{2g}(\Z)[m]$ of the integral symplectic group -- i.e., under all natural inclusions 
$$\PD(W^\vee)^{\rt} \hookrightarrow \PD((W')^\vee)^{\rt}$$ 
when $\gamma \cdot W \subseteq W'$ for $\gamma \in \Gamma[m]$.
This is the notion of an admissible collection, which we recall in Section~\ref{subsec:def-Agmtrop}, together with the formal definition of $A_g[m]^\trop$ as the geometric realization of a gluing of these fans.

A choice of basis $W \cong \Z^t$ induces an identification of $\PD(W^\vee)^{\rt}$ with the cone $\PDrt_t$ of positive semi-definite forms on $\R^t$ with kernel defined over $\Q$. There are three classical examples of polyhedral decompositions for $\PDrt_t$ for all $t$, called the perfect cone or first Voronoi, the second Voronoi, and the central cone decompositions. Fixing any of these three families of examples, the induced decompositions of $\PD(W^\vee)^{\rt}$ for all isotropic subspaces $W$ form an admissible collection and give a cellular structure on $A_g[m]^\trop$. All three families coincide when $t\le 2$ \cite[Example 4.3.1]{bmv}, giving rise to the following simplicial complex structure on the link of $A_2[m]^\trop$.

\begin{manualtheorem} {\ref{thm:simplicial-complex-description}} 
Fix $m \geq 2$. The link $LA_2[m]^\trop$ is isomorphic, via the ray labeling from Section~\ref{sec:gluing-pattern}, to the $2$-dimensional simplicial complex $\Delta_2[m]$ defined as follows.
\begin{enumerate}
    \item[(a)] The vertex set is the set of primitive elements in $(\Z/m\Z)^4/\{\pm 1\}$.
    \item[(b)] For $v \in (\ZZ/m\ZZ)^4$, write $[v]$ for its equivalence class up to sign. Then two vertices $[v]$ and $[w]$ form an edge if and only if $\{v,w\}$ is a $(\ZZ/m\ZZ)$-basis for an isotropic plane in $(\Z/m\Z)^4$.
    \item[(c)] Three vertices $[u],$ $[v]$, and $[w]$ form a 2-simplex if and only if each pair forms an edge and there exists a choice of signs such that \[\pm u \pm v \pm w = 0.\]
\end{enumerate}
\end{manualtheorem}

See Figure~\ref{fig:a22} for a depiction of this simplicial structure in the case $m = 2$, and Corollary~\ref{cor:counts-simplices-A2m} for a count of the cells when $m \geq 3$.

\medskip

The moduli space $\cA_g[m]$ is a level cover of the moduli $\cA_g$ of principally polarized abelian varieties. In analogy to the classical case, the tropicalization $A_g[m]^\trop$ admits a natural surjective map to $A_g^\trop$. Concretely, $A_g^\trop = \PD_g^{\rt}/\GL_g(\Z)$, while $A_g[m]^\trop$ is a connected topological space constructed from a finite number of copies of the quotient of $\PD_g^{\rt}$ by the congruence subgroup
$$\GL_g(\Z)[m] \; := \; \ker\big( \GL_g(\Z) \xrightarrow{\mathrm{mod} \; m} \GL_g(\Z/m\Z)\big).$$

When $g = 2$, we understand these quotient spaces and their gluings sufficiently well in order to describe the homotopy type of $A_2[m]^\trop$. For $m \geq 3$, we show that the link of $\PDrt_2/\GL_2(\Z)[m]$, which coincides with the compact modular curve $X(m)$, is homeomorphic to a closed orientable surface $\Sigma_{h_m}$ of genus 
\[h_m \; = \; 1 - \frac{m^2(6-m)}{24} \cdot \prod \nolimits_{p \mid m} (1- p^{-2}).\]

We also prove that the link $LA_2[m]^\trop$ is built out of $\pi_m = \frac{m^4}{2} \cdot \prod_{p\mid m} (1-p^{-4})$ copies of $\Sigma_{h_m}$; namely $\pi_{m}$ compact modular curves $X(m)$ glued together at cusps.
Moreover, these surfaces are glued according to the incidence structure of non-zero isotropic subspaces of $\Z^4$, modulo the action by $\Gamma[m]$. Concretely, we define the graph $G_m$ to be the quotient of the rational symplectic Tits building $\cT^\omega_4(\Q)$ (whose vertices correspond to isotropic lines and planes of $\Q^4$ and edges are given by inclusions) by the natural action by $\Gamma[m]$. Setting $c_{1,m} = \frac{m^2}{2} \cdot \prod_{p \mid m} (1-p^{-2})$, we obtain the following.

\begin{manualtheorem} {\ref{thm:homotopy}}
    For all $m\geq3$, there is a homotopy equivalence:
    \begin{equation} \label{eq:homotopy-intro}
        LA_{2}[m]^{\trop} \;\; \simeq  \;\; \left(\bigvee_{i=1}^{\pi_m} \Sigma_{h_m} \right) \vee \left( \bigvee_{j=1}^{\beta_{1}(G_m)} S^1\right),
    \end{equation}
    where $\beta_1(G_m)=\pi_m(c_{1,m}-2) + 1$ is the first Betti number of the graph $G_m$.
\end{manualtheorem}

A consequence of this homotopy equivalence is that, when $m$ is prime, the $\pm$-oriented symplectic Steinberg module studied in~\cite{capovillaSearle26} is a direct summand of the homology in degree 1 of $LA_2[m]^\trop$, see Remark~\ref{rem:steinberg}.

Our descriptions of the topology of $A_2[m]^\trop$ sit alongside a number of recent results in the literature determining the homeomorphism or simple homotopy type of boundary complexes of interesting compactified moduli spaces. See, for example, \cite{allcock-corey-payne-tropical}, \cite{el-maazouz-helminck-roehrle-souza-yun-topology}, and \cite{kannan-song-dual}.

By~\cite[Theorem~1.19]{ABBCV}, the tropicalization $A_g[m]^\trop$ captures the weight zero compactly supported cohomology of $\cA_g[m]$, in the sense that we have a natural isomorphism
$$W_0 H_c^\ast(\cA_g[m]; \, \Q) \; \cong \; H^*_c(A_g[m]^\trop; \, \Q).$$
Using our results about the topology of $A_2[m]^\trop$, we subsequently compute the weight zero compactly supported cohomology of $\cA_2[m]$ in Corollary~\ref{cor:A22} and Theorem~\ref{thm:a2m}.
Oda--Schwermer study the weight filtration on $H^*(\cA_2[m];\QQ)$ and achieve a near-complete calculation of the top-weight piece; our results, in particular, complete their calculation, see Remark~\ref{rem:oda-schwermer}.

\medskip

\noindent \textbf{Future directions.} It would be an interesting direction of future work to find analogous explicit descriptions of the homotopy type of  $LA_g[m]^\trop$ for $g \geq 3$. We expect there exists no description which is universal for all $g$ and allows for a full computation of the weight zero compactly supported cohomology of $\cA_{g}[m]$. However, it would be interesting to see if such computations are possible for small values of $g$, similar to the calculations done in \cite{bbcmmw-top} for $\cA_{g}$. Further, one may hope to describe $A_{g}[m]^{\trop}$ not just as a cell complex, but also as a cone stack in the sense of \cite{cchuw}.

\medskip

\noindent \textbf{Outline.} In Section~\ref{sec:preliminaries}, we define $A_g[m]^\trop$ and discuss preliminaries on isotropic subspaces. In Section~\ref{subsection-tropical-modular-interpretation}, we prove that $A_g[m]^\trop$ admits a tropical modular interpretation, as a parameter space for principally polarized tropical abelian varieties with level structure. We then specialize to the case of abelian surfaces in Sections~\ref{sec:simplicial-complex} and~\ref{sec:homeomorphism-type}. Specifically, we study the link of $A_2[m]^\trop$, giving a simplicial complex description in Section~\ref{sec:simplicial-complex} and a homotopy type description in Section~\ref{sec:homeomorphism-type}.

\medskip

\noindent {\bf Acknowledgments.} 
Part of the content of this paper was originally included in an earlier preprint version of~\cite{ABBCV}. We are again grateful to our colleagues who have provided helpful discussions for that project. Generative AI was not used in preparing this paper.

Initial conversations for this project took place at ICERM in August 2023, during the concurrent programs “Combinatorial Algebraic Geometry: Spring 2021 Reunion Event” and Collaborate@ICERM “Explicit Arithmetic of Shimura Curves.” We thank ICERM for providing a welcoming environment for these interactions.

Eran Assaf was supported by a Simons Collaboration grant (550029, to Voight), and by Simons Foundation Grant (SFI-MPS-Infrastructure-00008651, to Sutherland). Juliette Bruce was partially supported by the National Science Foundation under Award Nos. NSF FRG DMS-2053221 and NSF MSPRF DMS-2002239. Melody Chan was supported by NSF CAREER DMS-1844768, FRG DMS-2053221, and DMS-2401282.

\section{Preliminaries}
\label{sec:preliminaries}

\subsection{Definition of \texorpdfstring{$A_g[m]^\trop$}{Agmtrop}.} \label{subsec:def-Agmtrop}
We introduce the tropicalization of $\cA_g[m]$, as defined in \cite{ABBCV} following \cite{amrt, kkmsd}.  It shall be defined as the geometric realization of the colimit of a certain diagram of rational polyhedral fans.  Fix integers $g \ge 1$ and $m\ge 1$. Let $R$ be a commutative ring.  The relevant examples in this paper are $R=\ZZ$ or $R=\ZZ/m\ZZ$.  Consider the free module $R^{2g}$ with basis \[e_1,\ldots,e_g,f_1,\ldots,f_g\] 
and with the standard symplectic form $J\col R^{2g} \times R^{2g}\to R$ given by
\[
J(e_{i},e_{j})=J(f_{i},f_{j})=0 \quad \quad \text{and} \quad \quad J(e_{i},f_{j})=\delta_{ij}.
\]

\begin{definition} \label{def:isotropic}
    A free $R$-submodule of $R^{2g}$ is an \emph{isotropic subspace} if it is totally isotropic with respect to $J$ and is a direct summand of $R^{2g}$ with free complement.
\end{definition}

\noindent
Note that we use the word ``subspace'' even when $R$ is not a field, e.g., when $R=\ZZ/m\ZZ$ for $m$ composite.

Let \[\Gamma \;=\; \Sp_{2g}(\ZZ) \qquad \text{and} \qquad \Gamma[m] \;=\; \Sp_{2g}(\ZZ)[m] \;=\; \ker \left(\Sp_{2g}(\ZZ) \to \Sp_{2g}(\ZZ/m\ZZ)\right)\] denote the integral symplectic group and its level $m$ principal congruence subgroup, respectively. 

Let $\cW$ be the category whose objects are isotropic subspaces of $\Z^{2g}$, of any rank, with respect to the standard alternating form $J$ on $\Z^{2g}$. For isotropic subspaces $W$ and $W'$, there is an arrow $\gamma\col W \to W'$ in $\cW$ for each $\gamma \in \Gamma[m]$ such that $\gamma \cdot W \subseteq W'$.

Let $W$ be an isotropic subspace of $\Z^{2g}$ with respect to $J$, and let $W_\R = W \otimes_\ZZ \R$.  The real vector space $\Sym^2(W_\R)$ is canonically identified with the space of symmetric bilinear forms on $W_\R^\vee$, and has a distinguished lattice $\Sym^2(W)$ inside it.  This vector space contains the cone of all positive definite forms \[Q\col W_\RR^\vee \times W_\RR^\vee \to \RR, \qquad Q(x,x) > 0 \;\; \text{ for } \;\; x\ne 0. \] This cone shall be denoted $\PD(W^\vee)$ henceforth, dropping the subscript $\RR$ for brevity.  Let $\PD(W^\vee)^{\rt}$ denote the {\em rational closure} of $\PD(W^\vee)$, consisting of the positive semi-definite forms in $ \Sym^2(W_\RR)$ whose kernel is generated by vectors in $W^\vee$.

Admissible collections are defined in \cite{amrt}.  We will not give the full definition, but we recall that an admissible collection gives a functor $\Sigma\col \cW \to \mathsf{RPF}$ to the category of rational polyhedral fans with integral structure.  See~\cite[Sections 1.1-1.3]{ABBCV} for a recollection of admissible collections, along with the definitions of the category $\mathsf{RPF}$ and the association of a functor $\Sigma$ to an admissible collection. Here, we summarize only the facts for $\cA_g[m]$ that are most relevant to this paper. See~\cite[Sections 4.1]{ABBCV} for more details. 

Let $W$ be an object of $\cW$. Consider the level $m$ congruence subgroup 
    $$\GL(W)[m] \; := \; \ker\left( \GL(W) \longrightarrow \GL(W/mW) \right)$$
    of $\GL(W)$. Then $\Sigma(W)$ is a $\GL(W)[m]$-invariant rational polyhedral fan inside the real vector space $\Sym^2(W_\R)$ whose support is $\PD(W^\vee)^{\rt}$.  Its rational polyhedral cones are typically infinite in number but are required to form only finitely many orbits under the $\GL(W)[m]$-action.  The fan $\Sigma(W)$ is an example of an {\em admissible decomposition}~\cite{amrt}.

For $t$ a non-negative integer, we use $\PD_t^{\rt}$ to denote the rational closure of the cone of positive-definite matrices in $\Sym^2(\R^t)$; this is precisely the set of positive semi-definite forms on $\R^t$ whose kernel is defined over $\Q$. A choice of basis $W \cong \Z^t$ gives an isomorphism $\Sym^2(W_\R) \cong \Sym^2(\R^t)$ which identifies $\PD(W^\vee)^{\rt} \cong \PD_t^{\rt}$.  

There are three classical examples of admissible decompositions for $\PDrt_t$ for all $t$, called the \emph{perfect cone} or \emph{first Voronoi}, the \emph{second Voronoi}, and the \emph{central cone decompositions}. Fixing any of these three families of examples, the induced decompositions of $\PD(W^\vee)^{\rt}$ for all $W \in \cW$ form an admissible collection.  All three coincide when $t\le 2$ \cite[Example 4.3.1]{bmv} though not in general. When they coincide, we shall default to referring to the {\em perfect cone} decomposition and call the cones {\em perfect cones}.  The terminology comes from the bijection between top dimensional cones and perfect quadratic forms up to scaling. An explicit description is recalled next; see~\cite[(2.3), (8.8), (8.9), (9.10)]{namikawa} for further details.

\begin{proposition}  \label{prop:perf-2} (The perfect cone decomposition of $\PD_2^{\rt}$) Let $\Sigma(\ZZ^2)$ denote the perfect cone decomposition of $\PDrt_2$.  
 The one-dimensional perfect cones of $\Sigma(\ZZ^2)$ are the rays $\R_{\geq 0} \left\langle vv^t\right\rangle$ as $v$ ranges over all primitive vectors in $\Z^2$.  Thus the rays are in bijection with the set \[\{[v] \in \ZZ^2/\{\pm 1\}\col v \textrm{ primitive in }\ZZ^2\}\] where $[v] = \{v, -v\}$ is the equivalence class of $v$ up to sign.  The vectors $v,-v$ are called the {\em minimal vectors} for this ray. The two- and three-dimensional perfect cones in $\Sigma(\ZZ^2)$ are precisely the polyhedral cones of the form
    \begin{equation} \label{eq:2-and-3-dimensional-perfect-cones}
        \R_{\geq 0}  \left\langle vv^t, \, ww^t\right\rangle \quad \text{ and } \quad \R_{\geq 0}  \left\langle vv^t, \, ww^t, \, (v+w)(v+w)^t \right\rangle,
   \end{equation}
    respectively, for all $v, w \in \Z^2$ forming a $\Z$-basis.

A picture of the \emph{link} $(|\Sigma(\ZZ^2)|\setminus \{0\} ) / \, \R_{>0}$ is shown in Figure~\ref{fig:mod3}. One-, two-, and three-dimensional perfect cones are shown as vertices, edges, and triangles, respectively.

\end{proposition}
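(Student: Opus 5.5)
We would prove Proposition~\ref{prop:perf-2} by working directly from the definition of the perfect cone decomposition: the cones are $\R_{\ge 0}\langle vv^t : v \in \operatorname{Min}(Q)\rangle$ as $Q$ ranges over positive definite forms, where $\operatorname{Min}(Q)$ is the set of minimal nonzero lattice vectors of $Q$. The first step is to classify the possible minimal-vector sets in rank $2$. After a reduction by $\GL_2(\Z)$ (Lagrange--Gauss reduction), any positive definite binary form is equivalent to $Q=\begin{pmatrix} a & b\\ b & c\end{pmatrix}$ with $0\le 2b\le a\le c$. The minimum is $a$, and its minimal vectors are:
\begin{itemize}
\item $\pm e_1$ generically;
\item $\pm e_1,\pm e_2$ when $a=c$ and $b<a/2$;
\item $\pm e_1,\pm e_2,\pm(e_1-e_2)$ when $a=c=2b$, which is the hexagonal form.
\end{itemize}
Two non-proportional minimal vectors always form a $\Z$-basis, because a basis of the sublattice they span can be reduced with respect to $Q$ without increasing the minimum. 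Applying $\GL_2(\Z)$, replacing $e_1-e_2$ by $v+w$ via a sign change of $w$, and using that the decomposition is $\GL_2(\Z)$-invariant, the cones are exactly the three types in \eqref{eq:2-and-3-dimensional-perfect-cones}. Conversely, every pair $v,w$ forming a $\Z$-basis occurs, since transporting the hexagonal form and the square form realizes it.

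The second step is to check that these cones, together with the rays through $vv^t$ for $v$ primitive, cover all of $\PDrt_2$ and meet face to face. Rays $vv^t$ with $v$ primitive are exactly the rank-one rational semidefinite forms, and these form the boundary part of $\PDrt_2$. Primitivity is forced because $(kv)(kv)^t$ spans the same ray as $vv^t$. The faces of the triangle cone spanned by $vv^t$, $ww^t$, $(v+w)(v+w)^t$ are the three $2$-cones on pairs of bases: $\{v,w\}$, $\{v,v+w\}$ and $\{w,v+w\}$. Hence the listed families are closed under taking faces.

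For covering and face-to-face intersection, we would identify $\PD_2/\R_{>0}$ with the hyperbolic plane $\HH$. Under this identification rank-one rational forms become the cusps $\PP^1(\Q)$, the $2$-cones become geodesics between Farey neighbours $p/q$ and $r/s$ with $|ps-qr|=1$, and the $3$-cones become Farey triangles. It is classical that the Farey tessellation tiles $\HH$ with ideal triangles, and this gives both covering and the face-to-face property.

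Finally, the link picture follows by projectivizing. Each $k$-dimensional cone becomes a $(k-1)$-simplex, and adjoining the rational boundary points gives Figure~\ref{fig:mod3}.

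The main obstacle is the tiling and face-intersection claim, which is the one non-formal step. We would handle it by citing the Farey tessellation, or \cite{namikawa}, rather than reproving it.
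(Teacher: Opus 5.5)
Your proposal is correct, but the paper does not prove this proposition. It states it as a recollection and refers the reader to \cite[(2.3), (8.8), (8.9), (9.10)]{namikawa}, so your argument is a self-contained derivation where the paper relies on the classical literature.

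Your first step already determines every cone $\R_{\ge 0}\langle vv^t : v\in\operatorname{Min}(Q)\rangle$: you reduce to $0\le 2b\le a\le c$, read off $\operatorname{Min}(Q)$ in the three cases, and transport by $\GL_2(\ZZ)$. Granting Voronoi's general theorem that these cones form a face-to-face decomposition of $\PDrt_n$, this step proves the proposition by itself. Your second step, through the Farey tessellation, is then a check of the fan structure rather than a logically necessary ingredient, unless you want to avoid citing Voronoi.

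The citation buys the paper brevity and access to the general framework, such as admissibility and the coincidence of the three classical decompositions for $t\le 2$. Your route gives an explicit hyperbolic picture that the paper itself relies on later. In Remark~\ref{rem:modular-curve} it identifies the link of $\PDrt_2$ with $\HH\cup\PP^1(\QQ)$ and compares it with the Farey complex.

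Two small repairs are worth making:
\begin{enumerate}
\item Your justification that two non-proportional minimal vectors form a $\ZZ$-basis (``reducing a basis of the sublattice'') is muddled. The claim is immediate from your list, since any two of $e_1, e_2, e_1-e_2$ form a $\ZZ$-basis and $\GL_2(\ZZ)$ preserves this property.
\item The assertion that $2$-cones become geodesics needs one sentence of justification. The projectivization of $\PD_2$ is the Klein model of the hyperbolic plane, where geodesics are straight chords. So the segment from $vv^t$ to $ww^t$ is the geodesic joining the corresponding cusps, and it corresponds to the Farey semicircle in the upper half-plane model.
\end{enumerate}
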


    \begin{figure}[h]
\centering
\begin{subfigure}{.5\linewidth}
  \centering
  \includegraphics[width=.8\linewidth]{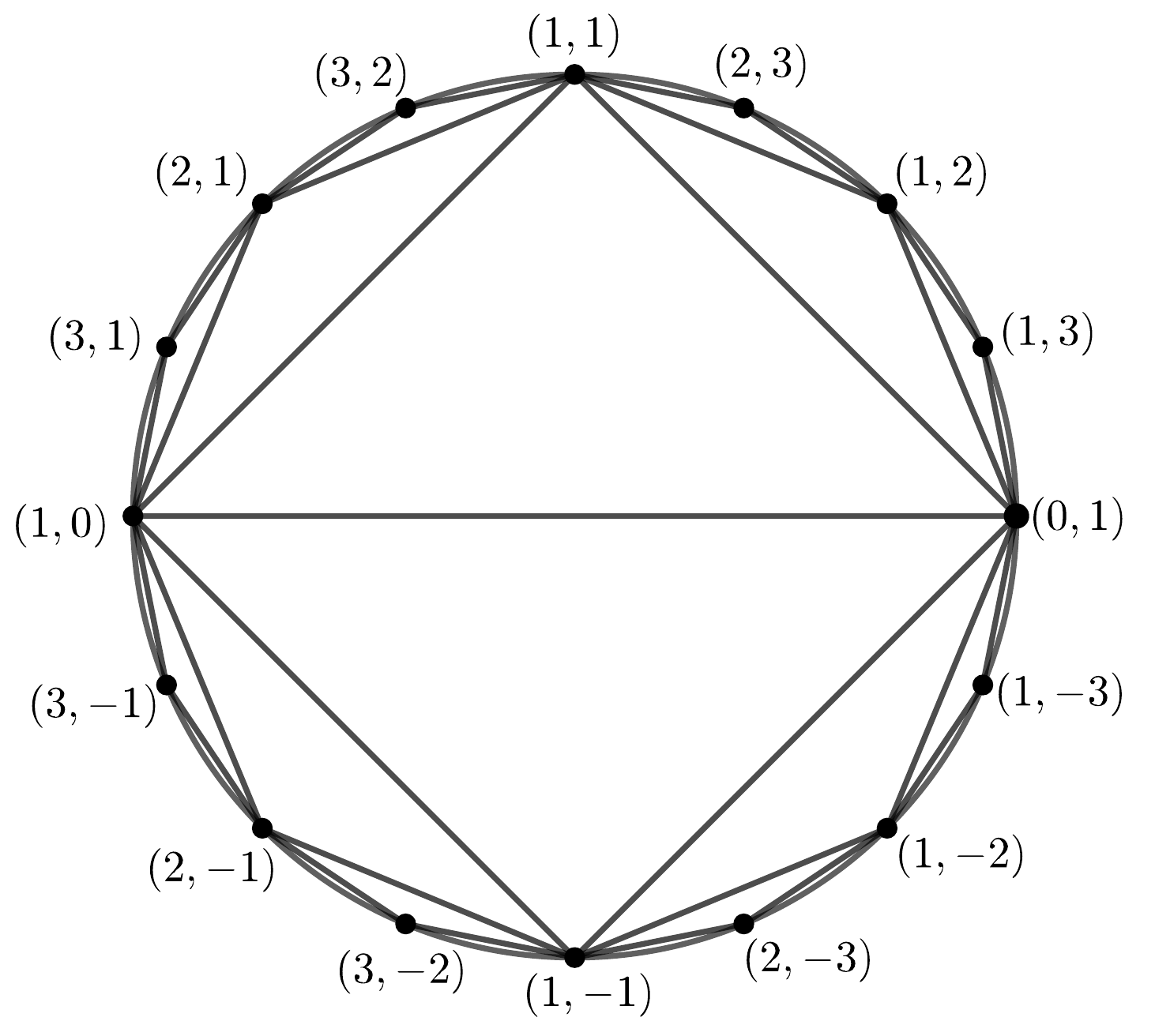}
\end{subfigure}%
\begin{subfigure}{.5\linewidth}
  \centering
  \includegraphics[width=.8\linewidth]{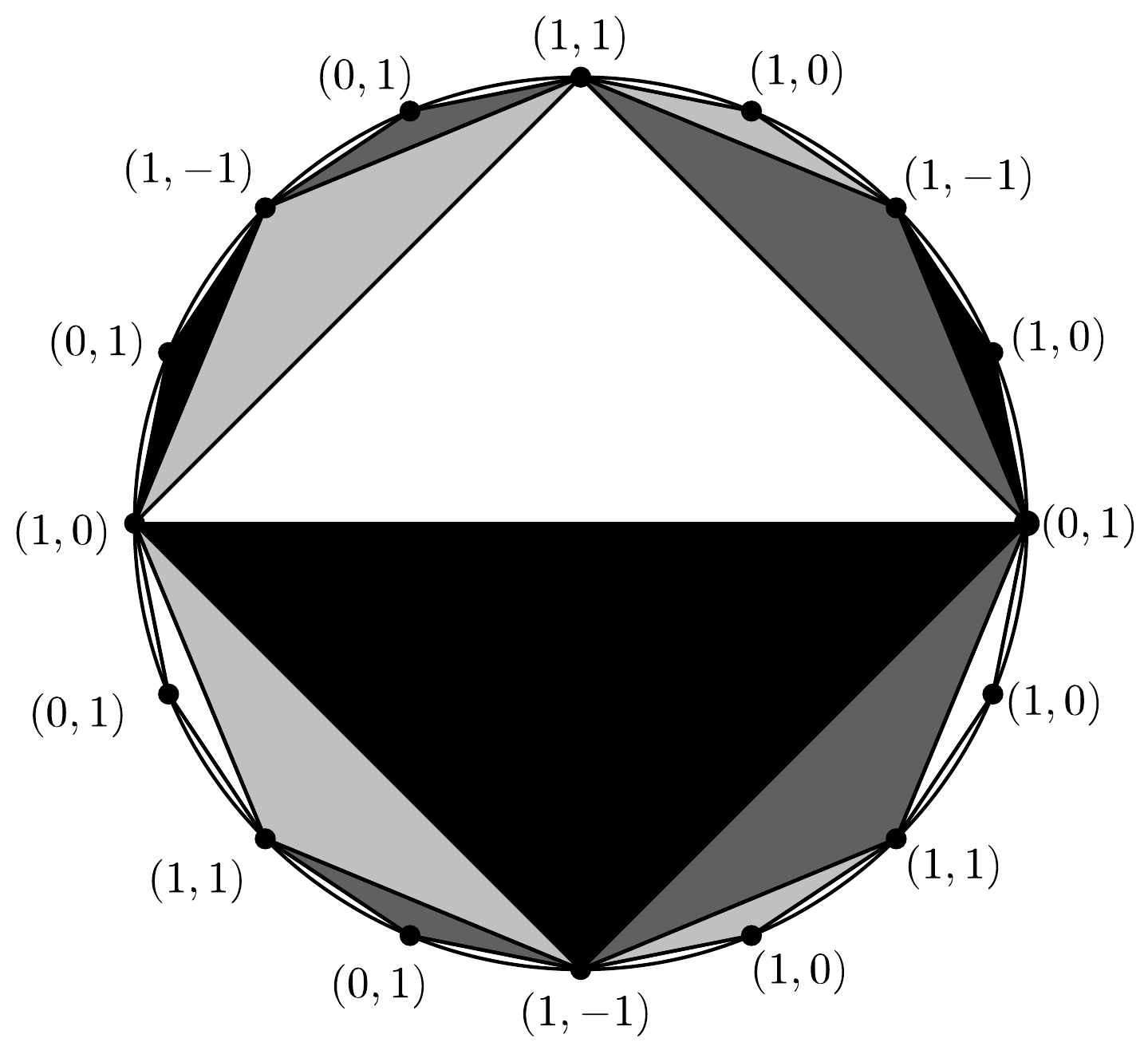}
\end{subfigure}
\caption{Left: the link of the perfect cone admissible decomposition of $\PDrt_2$, with the rays labeled by their corresponding minimal vector (up to sign). Right: under the action of $\GL_2(\Z)[3]$, triangles with like colors and vertices with the same label are glued together.}
\label{fig:mod3}
\end{figure}

\begin{definition}  \label{def:trop-moduli-space}
The {\em tropical moduli space of abelian varieties with level structure $m$}, denoted $A_g[m]^\trop$, is the colimit in $\mathsf{Top}$ of the composition
\begin{equation*}
    \cW \xrightarrow{\quad \Sigma \quad } \mathsf{RPF} \xrightarrow{\quad |\,\cdot\,| \quad } \mathsf{Top},
\end{equation*}
where $\Sigma$ is the functor induced by an admissible collection and the second functor $|\, \cdot \, |$ is geometric realization.
\end{definition}

Thus $A_g[m]^\trop$ is obtained by gluing the geometric realizations of the polyhedral fans $\Sigma(W)$ associated to isotropic subspaces $W \subseteq \Z^{2g}$ along the linear maps induced by elements $\gamma \in \Gamma[m]$ satisfying $\gamma \cdot W \subseteq W'$.  The linear maps glue cones isomorphically to cones.  The homeomorphism type of $A_g[m]^\trop$ is independent of choice of admissible collection \cite[Theorem 1.21]{ABBCV}, so we may speak of a single well-defined space $A_g[m]^\trop$.

\begin{remark} \label{rem:self-arrows}
    The elements of $\Gamma[m]$ that stabilize an isotropic subspace $W$ induce automorphisms of the polyhedral fan $\Sigma(W)$.  In fact, the automorphisms that arise are exactly the ones induced by the action of the level $m$ congruence subgroup $\GL(W)[m]$.

    Precisely,  
    \cite[Proposition 4.1]{ABBCV} shows that 
    the map $\Aut_\cW(W) \to \Aut(\Sigma(W))$ induced by $\Sigma$ factors as
    \[\Aut_\cW(W) \to \GL(W)[m] \to \Aut(\Sigma(W)).\]
Therefore, $A_g[m]^\trop$ is obtained from copies of quotient spaces 
    \begin{equation} \label{eq:quotient-PD-cone}
        \PD(W^\vee)^{\rt}/\GL(W)[m] \;\; \cong \;\; \PD_t^{\rt}/\GL_t(\Z)[m],
    \end{equation}
    where $W\in \cW$ and $t = \rank \, W$, glued according to the action of $\Gamma[m]$ on isotropic subspaces. Here, it is important that $\PD(W^\vee)^{\rt}$ and $\PD_t^{\rt}$ are topologized with their Satake topology rather than with their subspace topology from the Euclidean vector spaces $\Sym^2(W_\RR)$ and $\Sym^2(\RR^t)$; see~\cite[Remark 1.15]{ABBCV}. 
    
    In Section~\ref{sec:homeomorphism-type}, we describe the topology of the spaces~\eqref{eq:quotient-PD-cone} arising when $g = 2$.        See Figure~\ref{fig:mod3} for a depiction of the orbits of the action of $\GL_2(\Z)[3]$ on the perfect cone decomposition of $\PDrt_2$.
\end{remark}

\begin{remark}
    For $m = 1$, Definition~\ref{def:trop-moduli-space} recovers the moduli space $A_g^\trop$ of $g$-dimensional tropical abelian varieties, studied in~\cite{bmv}. In particular, $\cW$ contains a unique isomorphism class of $g$-dimensional isotropic subspaces and $A_g^\trop = \PD_g^{\rt}/\GL_g(\Z)$.
\end{remark}

Although $\Z^{2g}$ contains infinitely many isotropic subspaces, the category $\cW$ has only finitely many isomorphism classes of objects. 
In Remark~\ref{rem:concrete-bijection} below, we give a concrete finite description of these isomorphism classes.

\subsection{Isotropic subspaces} Towards describing the topology of the tropical moduli spaces $A_g[m]^\trop$, 
we study the category $\cW$ by relating isotropic subspaces in $\Z^{2g}$ to isotropic subspaces in $(\Z/m\Z)^{2g}$.
Let 
\[p_m = \begin{cases}
    \frac12 \phi(m) & \textrm{for }m\ge 3,\\
    1 &\textrm{for $m=1,2$.}
\end{cases}\]
Here, $\phi(m)$ denotes Euler's totient function. The significance of $p_m$ is that it is the number of additive generators of $\ZZ/m\ZZ$, up to multiplication by $- 1\in \ZZ/m\ZZ$.  The following is a restatement of~\cite[Proposition 4.8]{ABBCV}.

\begin{Proposition}
\label{prop:boundary-orbits-count} 
Consider the mod $m$ reduction map, which associates to an isotropic subspace $W \subset \Z^{2g}$ an isotropic subspace $\ov{W} \subset (\Z/m\Z)^{2g}$. In every rank $1 \leq t \leq g$, this map descends to a $p_m$--to--$1$ map from the set of $\Gamma[m]$-orbits of isotropic subspaces of $\Z^{2g}$ of rank $t$ onto the set of isotropic subspaces of $(\Z/m\Z)^{2g}$ of rank $t$.
\end{Proposition}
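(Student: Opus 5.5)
Since $\Gamma[m]$ acts trivially modulo $m$, the map $W\mapsto \ov W$ is constant on $\Gamma[m]$-orbits, so it descends to orbits. The plan is to identify the $\Gamma[m]$-orbits of rank-$t$ isotropic subspaces with a double coset space for $\Gamma$, and then compare it with the $\Sp_{2g}(\Z/m\Z)$-orbit description of isotropic subspaces of $(\Z/m\Z)^{2g}$.

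Fix the standard rank-$t$ isotropic subspace $W_0=\langle e_1,\dots,e_t\rangle$. By Witt's theorem over $\Z$ (any basis of a saturated isotropic sublattice extends to a symplectic basis), $\Gamma$ acts transitively on rank-$t$ isotropic subspaces of $\Z^{2g}$. Hence the $\Gamma[m]$-orbits are in bijection with $\Gamma[m]\backslash\Gamma/P$, where $P=\on{Stab}_\Gamma(W_0)$. Since $\Gamma\to\Sp_{2g}(\Z/m\Z)$ is surjective (strong approximation for $\Sp_{2g}$) with kernel $\Gamma[m]$, this set equals $\Sp_{2g}(\Z/m\Z)/\ov P$, where $\ov P$ is the image of $P$. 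By the analogous Witt-type statement over $\Z/m\Z$, which can be proved by CRT and Hensel lifting from the field case, $\Sp_{2g}(\Z/m\Z)$ acts transitively on rank-$t$ isotropic subspaces of $(\Z/m\Z)^{2g}$ in the sense of Definition~\ref{def:isotropic}. So the target set is $\Sp_{2g}(\Z/m\Z)/\ov{P}_m$, with $\ov P_m=\on{Stab}(\ov W_0)$. The reduction map then becomes the projection $\Sp_{2g}(\Z/m\Z)/\ov P\to \Sp_{2g}(\Z/m\Z)/\ov P_m$. It is surjective, and every fiber has cardinality $[\ov P_m:\ov P]$.

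It remains to show that this index equals $p_m$, which I expect to be the main step. Elements of $P$ are block upper triangular $\begin{pmatrix} A & *\\ 0 & D\end{pmatrix}$ relative to $W_0$ and a symplectic complement. The diagonal block $A\in\GL_t(\Z)$ acting on $W_0$ forces $\det A=\pm1$. By contrast, $\ov P_m$ allows any $\bar A\in\GL_t(\Z/m\Z)$, with the other blocks determined up to the unipotent radical. I will show that $\ov P$ is exactly the set of elements of $\ov P_m$ with $\det\bar A=\pm1$. The unipotent radical and the $\SL$-parts lift, because $\SL_t(\Z)\to\SL_t(\Z/m\Z)$ and $\Sp_{2(g-t)}(\Z)\to\Sp_{2(g-t)}(\Z/m\Z)$ are surjective, and the unipotent radical is an additive group that lifts trivially. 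The quotient is therefore $\ov P_m/\ov P\cong(\Z/m\Z)^\times/\{\pm1\}$ via $\det \bar A$. This group has order $\phi(m)/2=p_m$ for $m\ge3$, and order $1$ for $m=1,2$, since then $-1=1$.

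The delicate points are the precise surjectivity and lifting claims, namely the Levi decomposition of $\ov P_m$ and the fact that a mod-$m$ parabolic element lifts to $P$ exactly when its determinant is $\pm1$. I would handle these with explicit block matrices and cite standard surjectivity of $\SL_n$ and $\Sp_{2n}$ reduction. Since the statement is a restatement of \cite[Proposition 4.8]{ABBCV}, one could alternatively just invoke that result directly.
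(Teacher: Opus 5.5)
Your argument is correct, but it is a different route from the paper's. The paper does not prove this statement itself; it imports it from \cite[Proposition 4.8]{ABBCV}.

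The paper then uses the count to upgrade the refined map $W \mapsto (\ov W, \pm\,\ov{w_1}\wedge\cdots\wedge\ov{w_t})$ of Remark~\ref{rem:concrete-bijection} to a bijection. It proves only surjectivity by hand, in Lemma~\ref{lemma:isotropic-basis-lifts}, via Witt's theorem over $\Z/m\Z$ and Newman--Smart. Injectivity comes purely from the cardinality match with the cited proposition.

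Your double-coset argument is self-contained and gives more than the count. The fiber $\ov P_m/\ov P \cong (\Z/m\Z)^\times/\{\pm 1\}$, detected by $\det \bar A$ of the action on $\ov W_0$, is canonically the set of generators of $\bigwedge^t \ov W$ up to sign. The orbit of $\gamma W_0$ has refined invariant $\pm\det(\bar A)\,\ov{e_1}\wedge\cdots\wedge\ov{e_t}$. So your computation proves the refined bijection directly, including injectivity, without needing the cited count. The paper's route buys the explicit bijection with less group theory, at the price of relying on the external count.

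The lifting ingredients you need are the same ones the paper uses in the proof of Lemma~\ref{lemma:Gamma-m-transitive-on-bases-equal-mod-m}: the Levi--unipotent decomposition of the mod-$m$ stabilizer and blockwise lifting via Newman--Smart.

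Two small corrections:
\begin{itemize}
\item For $t<g$ the unipotent radical is not an additive (abelian) group; it is a two-step nilpotent Heisenberg-type group. It still lifts, because it is parametrized by freely liftable blocks $e,h$ together with the symmetric matrix $f+h\,{}^te$, which you lift to a symmetric integer matrix exactly as in that proof.
\item To lift a Levi block $\bar A$ with $\det\bar A=-1$, combine a lift from $\SL_t(\Z)$ with $\mathrm{diag}(-1,1,\dots,1)$. This also shows that the image of $\ov P$ under $\det$ is all of $\{\pm1\}$.
\end{itemize}
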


\begin{remark} \label{rem:concrete-bijection}
The isomorphism classes of objects in $\cW$ admit a concrete finite description. An isomorphism $W \to W'$ in $\cW$ is given by an element $\gamma \in \Gamma[m]$ with $\gamma \cdot W = W'$. Thus the isomorphism classes of objects are the $\Gamma[m]$-orbits of integral isotropic subspaces. 
The previous Proposition~\ref{prop:boundary-orbits-count} gives a count of such orbits in every rank. In fact, one can enhance this proposition to an explicit natural bijection
\begin{equation} \label{eq:explicit-bijection}
\begin{tikzcd}[column sep = 3em] \left\{ \begin{matrix} \text{isotropic subspaces}\\ \text{$W \subset \ZZ^{2g}$ of rank $t$} \end{matrix} \right\}/\, \Gamma[m] \arrow[r] & \left\{ (L, \omega) \;\; \bigg| \;\;\begin{matrix} \text{$L \subset (\ZZ/m\ZZ)^{2g}$} \\ \text{ isotropic of rank $t$,} \\ \text{$\omega$ a generator of $\left(\bigwedge^{t} L \right)$} \end{matrix} \right\} / \, (L,\omega)\!\sim\!(L,-\omega)
\end{tikzcd}
\end{equation}
sending $W$ to $(\ov{W}, \ov{w_1} \wedge \cdots \wedge \ov{w_t})$, where $\ov{W}$ is the mod $m$ reduction of $W$, and $w_1,\dots, w_t$ is a $\Z$-basis for $W$. Since $\bigwedge^t L$ is a free $\ZZ / m \ZZ$-module of rank $1$, this bijection explains the appearance of $p_m$ in Proposition~\ref{prop:boundary-orbits-count}.

Nevertheless, passing to a skeleton does not make $\cW$ a finite category as its Hom-sets remain infinite. For example, the automorphism group of the object corresponding to the trivial zero-dimensional isotropic subspace is all of $\Gamma[m]$. In Sections~\ref{sec:simplicial-complex} and~\ref{sec:homeomorphism-type}, we will control these morphisms sufficiently when $g=2$ to give an explicit description of $A_2[m]^\trop$.
\end{remark}

The following lemma, which will be needed later, proves in particular the surjectivity of the map~\eqref{eq:explicit-bijection}. Then, Proposition~\ref{prop:boundary-orbits-count} immediately implies that this map is bijective, as claimed.

\begin{lemma} \label{lemma:isotropic-basis-lifts}
    Let $g,m$ be positive integers. Suppose $a_1, \dots,a_t \in (\Z/m\Z)^{2g}$ form a $(\Z/m\Z)$-basis for a rank $t$ isotropic subspace. Then there exists an isotropic subspace $W \subset \Z^{2g}$ of rank $t$ and a $\Z$-basis $v_1,\dots,v_t$ for $W$ such that $v_i$ reduces to $a_i$ mod $m$ for all $i$.
\end{lemma}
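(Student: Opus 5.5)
The approach is to reduce to lifting a single symplectic basis. I will show that the given isotropic basis can be completed to a symplectic basis of $(\Z/m\Z)^{2g}$, and then use the surjectivity of the reduction map $\Sp_{2g}(\Z)\to\Sp_{2g}(\Z/m\Z)$ to carry the standard isotropic lattice onto a lift.

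\textbf{Step 1: completing to a symplectic basis mod $m$.} By Definition~\ref{def:isotropic}, $L=\langle a_1,\dots,a_t\rangle$ is a free direct summand of $(\Z/m\Z)^{2g}$ with free complement, and $J$ is perfect. Hence the map
\[(\Z/m\Z)^{2g}\longrightarrow L^\vee=\operatorname{Hom}(L,\Z/m\Z),\qquad x\mapsto J(x,\cdot)|_L,\]
is surjective. So there are $b_1,\dots,b_t$ with $J(a_i,b_j)=\delta_{ij}$. Next I make the $b_j$ mutually isotropic by the usual Gram--Schmidt correction $b_j \mapsto b_j - \sum_{k<j} J(b_k,b_j)\,a_k$ (or its standard variant). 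This leaves the pairings with the $a_i$ unchanged, because $L$ is isotropic.

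The span $H=\langle a_i,b_j\rangle$ is then a free module on which $J$ is unimodular. Therefore $(\Z/m\Z)^{2g}=H\oplus H^{\perp}$, via the projection $x\mapsto x-\sum_i J(x,b_i)a_i+\sum_i J(x,a_i)b_i$ with signs adjusted. The complement $H^\perp$ is a projective summand with a perfect alternating form; it is free of rank $2g-2t$, being stably free over a product of local rings. A perfect alternating form on a free module over a local ring has a symplectic basis, and over $\Z/m\Z$ one can work prime-power by prime-power via CRT. This produces a symplectic basis $a_1,\dots,a_t,\,c_{t+1},\dots,c_g,\,b_1,\dots,b_t,\,d_{t+1},\dots,d_g$. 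The matrix $\bar\gamma$ sending $e_i\mapsto a_i$ for $i\le t$ then lies in $\Sp_{2g}(\Z/m\Z)$.

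\textbf{Step 2: lifting.} The reduction map $\Sp_{2g}(\Z)\to\Sp_{2g}(\Z/m\Z)$ is surjective; this is classical strong approximation for $\Sp_{2g}$, and presumably also underlies \cite[Proposition 4.8]{ABBCV}. Choose $\gamma\in\Sp_{2g}(\Z)$ lifting $\bar\gamma$, and set $v_i=\gamma e_i$ and $W=\gamma\cdot\langle e_1,\dots,e_t\rangle$. Then $W$ is isotropic, since $\gamma$ preserves $J$. It is a direct summand with free complement $\gamma\langle e_{t+1},\dots,f_g\rangle$. It has $\Z$-basis $v_1,\dots,v_t$, and $v_i\equiv a_i \pmod m$.

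\textbf{Main obstacle.} The delicate point is Step 1 when $m$ is composite, where one must check that the complement is free and carries a symplectic basis. I would handle this by working over each $\Z/p^k\Z$, which is local so that projective modules are free and the standard symplectic Gram--Schmidt applies, and then gluing the results with the Chinese remainder theorem. The surjectivity in Step 2 I take as standard.
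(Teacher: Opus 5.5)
Your proof is correct and follows the paper's route: the paper cites Witt's extension theorem over $\Z/m\Z$ to get $\gamma\in\Sp_{2g}(\Z/m\Z)$ with $\gamma\cdot a_i=\ov{e_i}$, lifts $\gamma^{-1}$ to $\Sp_{2g}(\Z)$ via Newman--Smart, and sets $v_i=\gamma' e_i$; your Step~1 just proves that extension by hand by completing the $a_i$ to a symplectic basis. One small slip there: the correction making the $b_j$ mutually isotropic should be $b_j\mapsto b_j+\sum_{k<j}J(b_k,b_j)\,a_k$, since with the minus sign you get $J(b_j',b_l')=2J(b_j,b_l)$ rather than $0$.
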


\begin{proof}
    Let $\ov{e_1}, \dots,\ov{e_t}$ be the first $t$ standard symplectic coordinates on $(\Z/m\Z)^{2g}$.
    By Witt's extension theorem over $\Z/m\Z$, an isometry from $\langle a_1,\dots,a_t\rangle$ to $\langle \ov{e_1}, \dots, \ov{e_t}\rangle$ can be extended to an isometry of the whole space $(\Z/m\Z)^{2g}$, so there exists $\gamma \in \Sp_{2g}(\Z/m\Z)$ with $\gamma \cdot a_i = \ov{e_i}$ for all $i$.

    By~\cite[Theorem 1]{newman-smart}, the mod $m$ reduction $\Sp_{2g}(\Z) \to \Sp_{2g}(\Z/m\Z)$ is surjective, so we can pick $\gamma' \in \Sp_{2g}(\Z)$ with reduction $\ov{\gamma'} = \gamma^{-1} \in \Sp_{2g}(\Z/m\Z)$. Setting $v_i = \gamma' \cdot e_i$, we are done.
\end{proof}

\section{Tropical modular interpretation}\label{subsection-tropical-modular-interpretation}

We now give an interpretation of $\Agtropm$ as a tropical moduli space. Namely, we define principally polarized tropical abelian varieties (pptav) of dimension $g$ with level $m$ structure and show that the points of $\Agtropm$ are in bijection with their isomorphism classes.  For $m=1$, this definition agrees with \cite[Definition 4.1.1]{bmv}, as verified below.

Throughout this section, $J$ will denote the standard symplectic form on $\ZZ^{2g}$. If $\Lambda$ is a free abelian group of rank $2g$ and $S \col \Lambda\times\Lambda\to \ZZ$ is a symplectic form (i.e.\ an alternating non-degenerate $\ZZ$-bilinear form), we say that an isomorphism of $\ZZ$-modules $\psi\col \Lambda \to \ZZ^{2g}$ (i.e.\ a choice of $\Z$-basis for $\Lambda$) takes $S$ to the standard symplectic form on $\ZZ^{2g}$ if the diagram 
\[
\begin{tikzcd}[row sep = 3em, column sep = 3em]
    \Lambda \times \Lambda \arrow[r,"S"] \arrow[d,swap, "\psi\times\psi"]& \ZZ \arrow[d,equal]\\
    \ZZ^{2g} \times \ZZ^{2g} \arrow[r,"J"] & \ZZ
\end{tikzcd}
\]
commutes.
In fact, such a choice of $\Z$-basis for $\Lambda$ exists if and only if $\det(S) = 1$.

We likewise refer to the standard symplectic form on $(\ZZ/m\ZZ)^{2g}$ and an isomorphism of free $(\ZZ/m\ZZ)$-modules taking an alternating $(\ZZ/m\ZZ)$-bilinear form to the standard symplectic form. Further, note that if $\Lambda$ is a free abelian group of rank $2g$ and $S \col \Lambda\times\Lambda\to \ZZ$ is an alternating $\ZZ$-bilinear form, there is an induced alternating $(\ZZ/m\ZZ)$-bilinear form on $(\Lambda/m\Lambda)$. Abusing notation slightly, we call this form $S$ as well.

\begin{definition}\label{def-pptav}
A {\em principally polarized tropical abelian variety (pptav) of dimension} $g$ \emph{with level} $m$ \emph{structure} is a quadruple $(\Lambda,S,i,Q)$, where:
\enumnow{\item $\Lambda$ is a free abelian group of rank $2g$;
\item $S\col \Lambda\times\Lambda\to \ZZ$ is a symplectic form on $\Lambda$ having determinant $1$;
\item $i\col \Lambda/m\Lambda\to(\ZZ/m\ZZ)^{2g}$ is an isomorphism taking $S$ to the standard symplectic form on $(\ZZ/m\ZZ)^{2g}$;
\item $Q\col \Lambda_\RR\times\Lambda_\RR\to\RR$ is a positive semi-definite form on $\Lambda_\RR$ with kernel $W_\R^\perp$, for $W \subset \Lambda$ an isotropic subspace with respect to $S$. 
}
\end{definition}

If $(\Lambda,S,i,Q)$ and $(\Lambda',S',i',Q')$ are pptav's of dimension $g$ with level $m$ structure, an \emph{isomorphism} between them is an isomorphism of $\ZZ$-modules $\Lambda\to \Lambda'$ taking $S$ to $S'$, $i$ to $i'$, and $Q$ to $Q'$. 
We denote the \emph{isomorphism class} of $(\Lambda,S,i,Q)$ by $[\Lambda,S,i,Q]$. 

Let $S_{g,m}$ denote the set of isomorphism classes of pptav's of dimension $g$ with level $m$ structure.

\begin{lemma}\label{lem:fix-trop-mod}
    Let $\Lambda = \Z^{2g}$ and $J$ be the standard symplectic form on $\Lambda$. Then there is a surjection of sets:
    \[\Phi \; \col \; \bigsqcup_{W} \, \PD(W^\vee) \; \longrightarrow \; S_{g,m},\] where $W$ ranges over all isotropic subspaces of $\Lambda$.
\end{lemma}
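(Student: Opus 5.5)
First define $\Phi$ explicitly. For an isotropic subspace $W \subset \Lambda = \ZZ^{2g}$ and a positive definite form $Q_0 \in \PD(W^\vee)$, regard $Q_0$ as a symmetric bilinear form on $W_\RR^\vee$. Compose it with the restriction map $\Lambda_\RR \to W_\RR^\vee$. This map is induced by $J$: it sends $x$ to $J(x,\cdot)|_{W_\RR}$. The result is a positive semi-definite form $Q$ on $\Lambda_\RR$.

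Its kernel is exactly the kernel of the restriction map, namely $\{x : J(x,w)=0 \text{ for all } w\in W_\RR\} = W_\RR^\perp$. This uses that $W_\RR^\vee$ is spanned by the image of the surjective restriction map and that $Q_0$ is positive definite there. We then set
\[\Phi(W, Q_0) \;=\; [\ZZ^{2g}, J, \iota, Q],\]
where $\iota$ is the identity of $(\ZZ/m\ZZ)^{2g}$. It then remains to check conditions (1)--(4) of Definition~\ref{def-pptav}. The first three are immediate, and (4) is the kernel computation just given.

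For surjectivity, take an arbitrary $(\Lambda', S, i, Q')$ with $Q'$ having kernel $(W')_\RR^\perp$ for some $S$-isotropic $W' \subset \Lambda'$. Since $\det S = 1$, there is an isomorphism $\psi\col \Lambda' \to \ZZ^{2g}$ taking $S$ to $J$. Its reduction $\bar\psi$ and $i$ both take $S$ to the standard form, so $i \circ \bar\psi^{-1} \in \Sp_{2g}(\ZZ/m\ZZ)$. By the surjectivity of $\Sp_{2g}(\ZZ) \to \Sp_{2g}(\ZZ/m\ZZ)$ from \cite[Theorem 1]{newman-smart}, already used in Lemma~\ref{lemma:isotropic-basis-lifts}, we can lift this to $\gamma \in \Sp_{2g}(\ZZ)$. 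Replacing $\psi$ by $\gamma\circ\psi$ gives an isomorphism taking $S$ to $J$ and $i$ to the identity level structure. Transport $Q'$ via $\psi$ to a form $Q$ on $\RR^{2g}$. Its kernel is $W_\RR^\perp$, where $W = \psi(W')$ is $J$-isotropic; it is a direct summand because $\psi$ is an isomorphism.

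A positive semi-definite form with kernel $W_\RR^\perp$ descends to a positive definite form on $\RR^{2g}/W_\RR^\perp$. Via $J$, this quotient is identified with $W_\RR^\vee$. The descended form is therefore a positive definite form $Q_0$ on $W_\RR^\vee$, i.e.\ an element of $\PD(W^\vee) \subset \Sym^2(W_\RR)$, and by construction $\Phi(W,Q_0)$ is the class of the given quadruple. Here we need that the map $\RR^{2g}/W_\RR^\perp \to W_\RR^\vee$ is an isomorphism, which holds by nondegeneracy of $J$.

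The main obstacle is the level-structure matching: adjusting the symplectic basis so that it also carries $i$ to the identity. This is exactly where the lift of $\Sp_{2g}(\ZZ/m\ZZ)$ to $\Sp_{2g}(\ZZ)$ is essential. The rest is bookkeeping with the identification $\Lambda_\RR/W_\RR^\perp \cong W_\RR^\vee$, and one must be careful that the orientation conventions in this identification via $J$ are used consistently in both directions.
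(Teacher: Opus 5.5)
Your proposal is correct and follows the paper's proof essentially step for step. You use the same definition of $\Phi$, pulling back along $\Lambda_\RR \to \Lambda_\RR/W_\RR^\perp \cong W_\RR^\vee$, and the same surjectivity argument: normalize an arbitrary pptav to one of the form $(\ZZ^{2g}, J, \mathrm{id}, Q)$ by taking a symplectic basis and adjusting it with a lift from $\Sp_{2g}(\ZZ/m\ZZ)$ to $\Sp_{2g}(\ZZ)$ via Newman--Smart. Your explicit lift $\gamma$ of $i\circ\bar\psi^{-1}$ simply spells out what the paper states as ``we can pick $I'$ so that the diagram commutes.''
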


\begin{proof}
    Throughout the proof, $i: \Lambda/m\Lambda \to (\Z/m\Z)^{2g}$ will denote the identity isomorphism. 
    
    Let us first define $\Phi$. For any isotropic subspace $W\subset \Lambda$, the form $J$ induces an isomorphism $W_\R^\vee \cong \Lambda_\R/W_\R^\perp$.  
    By precomposing with the quotient map $\Lambda_\R \twoheadrightarrow \Lambda_\R/W_\R^\perp$, a form $T \in \PD(W^\vee)$ therefore induces a positive semi-definite form on $\Lambda_\R$ with kernel $W_\R^\perp$; let us denote this positive semi-definite form by $Q_T$.
    We define $\Phi$ to send 
    $T$ to the isomorphism class $[\Lambda, J, i, Q_T]$.

    To prove surjectivity, note first that (the isomorphism class of) any pptav of the form $(\Lambda, J, i, Q)$ is in the image of $\Phi$. Indeed, given any positive semi-definite form $Q$ on $\Lambda_\R$ with kernel $W_\R^\perp$ for some isotropic subspace $W \subset \Lambda$, the form $Q$ naturally induces a form $T_Q \in  \PD(W^\vee)$ and the image of $T_Q$ under $\Phi$ precisely equals $[\Lambda, J, i, Q]$.

    So we only need to show that any pptav $(\Lambda', J',i',Q')$ of dimension $g$ with level $m$ structure is isomorphic to some pptav of the form $(\Lambda, J, i, Q)$. Let $I' : \Lambda' \to \Z^{2g}$ be an isomorphism taking $J'$ to the standard symplectic form on $\Z^{2g}$; such an isomorphism exists because $J'$ is assumed to have determinant $1$ by Definition~\ref{def-pptav}. Moreover, since the mod $m$ reduction $\Sp_{2g}(\Z) \to \Sp_{2g}(\Z/m\Z)$ is surjective by \cite[Theorem 1]{newman-smart}, we can pick $I'$ so that the diagram
    \[
\begin{tikzcd}[column sep = 2.5em, row sep = 2.5em]
    \Lambda' \arrow[r, "I'"] \arrow[d,two heads]& \ZZ^{2g} = \Lambda \arrow[d,two heads] \\
    \Lambda'/m\Lambda' \arrow[r,"i'"] & (\ZZ/m\ZZ)^{2g} = \Lambda/m\Lambda
\end{tikzcd}
\]
commutes. Therefore, the map $I': \Lambda' \to \Lambda$ induces an isomorphism of pptav's between $(\Lambda',J',i',Q')$ and $(\Lambda,J,i,Q)$, for $Q$ defined as the pullback of $Q'$ along $(I')^{-1}$.
\end{proof}

\begin{theorem}\label{thm:modular interpretation}
    There is a bijection of sets $\Psi: \Agtropm \to S_{g,m}$
    such that $\Psi \circ \pi = \Phi$, where $\Phi$ is the map from Lemma \ref{lem:fix-trop-mod} and $\pi :  \bigsqcup_{W} \PD(W^\vee) \to \Agtropm$ denotes the natural surjection.
\end{theorem}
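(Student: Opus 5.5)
The plan is to show that $\Phi$ is constant on the fibers of $\pi$ and that, conversely, two points with the same image under $\Phi$ have the same image under $\pi$. Together with the surjectivity of $\Phi$ from Lemma~\ref{lem:fix-trop-mod} and of $\pi$, this produces a well-defined bijection $\Psi$ with $\Psi\circ\pi=\Phi$.

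First I need the point-set description of the colimit $\Agtropm$. Since every arrow in $\cW$ is induced by some $\gamma\in\Gamma[m]$ with $\gamma W\subseteq W'$, and since $\PD(W^\vee)^{\rt}$ decomposes as the disjoint union of the sets $\PD(U^\vee)$ over isotropic $U\subseteq W$, every point of the colimit is represented by a positive definite form on a smallest face. The equivalence relation on $\bigsqcup_W \PD(W^\vee)$ is then generated by $T\in\PD(W^\vee)\sim \gamma_*T\in\PD((\gamma W)^\vee)$ for $\gamma\in\Gamma[m]$. The inclusion maps $\PD(W^\vee)^{\rt}\hookrightarrow\PD(W'^\vee)^{\rt}$ send $\PD(W^\vee)$ identically to the stratum indexed by $W$, so they identify no new points. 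As a set, therefore, $\Agtropm=\bigl(\bigsqcup_W\PD(W^\vee)\bigr)/\Gamma[m]$. I would justify this by noting that the colimit in $\mathsf{Top}$ has the set-level colimit as its underlying set, and that the relation generated by the arrows reduces to the $\Gamma[m]$-action once one passes to the minimal stratum.

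Next, $\Phi$ is $\Gamma[m]$-invariant. Given $\gamma\in\Gamma[m]$ and $T\in\PD(W^\vee)$, the map $\gamma\col\Lambda\to\Lambda$ preserves $J$ and reduces to the identity mod $m$, so it preserves $i$. It also carries $Q_T$, the form with kernel $W_\RR^\perp$, to $Q_{\gamma_*T}$, the form with kernel $(\gamma W)_\RR^\perp$. I would check this last point by a naturality computation with the identification $W_\RR^\vee\cong\Lambda_\RR/W_\RR^\perp$ induced by $J$. Hence $\gamma$ is an isomorphism of pptav's, and $\Phi$ descends to a map $\Psi$.

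For injectivity, suppose $\Phi(T)=\Phi(T')$ with $T\in\PD(W^\vee)$ and $T'\in\PD(W'^\vee)$. Then there is an isomorphism $\gamma\col\Lambda\to\Lambda$ preserving $J$, compatible with the identity $i$, and pulling $Q_{T'}$ back to $Q_T$. Preservation of $J$ puts $\gamma$ in $\Sp_{2g}(\ZZ)$, and compatibility with $i$ puts it in $\Gamma[m]$. Comparing kernels gives $\gamma W_\RR^\perp=W'^\perp_\RR$, so $\gamma W=W'$ because isotropic subspaces are saturated and recovered from their perpendiculars. Then $\gamma_*T=T'$ by the same naturality computation, so $\pi(T)=\pi(T')$.

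I expect the main obstacle to be the first step: describing the underlying set of the colimit precisely, in particular ensuring that the face inclusions together with the $\Gamma[m]$-arrows generate exactly the orbit relation on the positive definite strata. The remaining steps are bookkeeping.
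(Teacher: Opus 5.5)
Your proposal is correct and follows essentially the same route as the paper. Both arguments rest on two facts. First, two positive definite forms are identified in $\Agtropm$ exactly when they differ by an element of $\Gamma[m]$. Second, isomorphisms between pptav's of the form $(\Lambda,J,i,Q)$ are exactly elements of $\Gamma[m]$. The paper packages these as mutually inverse maps $\Psi,\Psi'$, whereas you use $\Gamma[m]$-invariance of $\Phi$, injectivity of the induced map, and surjectivity of $\Phi$.

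Your explicit reduction of the colimit's underlying set to $\bigl(\bigsqcup_W \PD(W^\vee)\bigr)/\Gamma[m]$, via the stratification of each rational closure and the identity arrows, justifies a step that the paper only asserts.
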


\begin{proof}
    Let us construct inverse maps $\Psi:\Agtropm\to S_{g,m}$ and $\Psi':S_{g,m}\to \Agtropm$. Continue to let $i:\Lambda/m\Lambda \to (\Z/m\Z)^{2g}$ denote the identity isomorphism.
    
    First, as noted in the theorem statement, the set $\bigsqcup_W \PD(W^\vee)$ surjects onto $\Agtropm$ because
    $$
    \mathrm{supp}\left(\PD(W^\vee)^{\rt} \right) =
    \bigsqcup_{W' \subset W} \PD((W')^\vee).
    $$

    Given a point $p \in \Agtropm$, let $T\in \PD(W^\vee)$ be an element covering it. We define $\Psi(p)=[\Lambda, J,i,Q_T]$ for $Q_T:\Lambda_\R \times \Lambda_\R \to \R$ the positive semi-definite form with kernel $W_\R^\perp$ that $T$ induces as in the proof of Lemma~\ref{lem:fix-trop-mod}. This is well-defined because two forms $T \in \PD(W^\vee)$ and $T' \in \PD\big((W')^\vee \big)$, for $W,W' \subset \Lambda$ isotropic, descend to the same point in $\Agtropm$ if and only if the induced forms $Q_T, Q_{T'}$ on $\Lambda_\R$ differ by an element of $\Gamma[m] = \Sp_{2g}(\Z)[m]$.

    Conversely, recall from the proof of Lemma~\ref{lem:fix-trop-mod} that every isomorphism class in $S_{g,m}$ has a representative $(\Lambda,J,i,Q)$. Define $\Psi'$ to send $[\Lambda,J,i,Q]$ to the point $\pi(T_Q) \in \Agtropm$, where $T_Q$ is as described in the proof of Lemma~\ref{lem:fix-trop-mod}. This is well-defined because, if $[\Lambda, J, i, Q] = [\Lambda, J, i, Q']$, then there exists an element of $\Sp_{2g}(\Z)[m]$ taking $Q$ to $Q'$, meaning that $\pi(T_Q) = \pi(T_{Q'})$.

It is straightforward to check that $\Psi$ and $\Psi'$ are inverses of each other.   
\end{proof}

\begin{remark}\label{rem:same as bmv}
    For $m=1$, i.e.\ the case with no level structure, pptav's were defined by \cite{bmv} as pairs $(L, Q)$, where $L\cong \ZZ^g$ is a free abelian group and $Q$ is a positive semi-definite form on $L_\RR$ with $L$-rational kernel; and two such pairs are isomorphic if they differ by an element of $\GL_g(\Z)$. For this $m=1$ case, there is a natural bijection between isomorphism classes of pptav's as in Definition~\ref{def-pptav} and isomorphism classes of pptav's in the sense of~\cite{bmv}. Indeed, this is precisely Theorem~\ref{thm:modular interpretation} specialized to $m = 1$, since $A_g^\trop \cong \PDrt_g\!/\GL_g(\Z)$.
\end{remark}

\section{A simplicial complex structure on \texorpdfstring{$A_2[m]^\trop$}{A2 trop[m]}} \label{sec:simplicial-complex}

Fix $g=2$. We study the simplicial structure underlying the spaces $A_2[m]^\trop$ equipped with the canonical choice (for $g=2$) of admissible collection.  We derive the weight zero compactly supported cohomology of $\mathcal{A}_2[m]$ as a consequence.

\subsection{Gluing pattern} \label{sec:gluing-pattern}

Fix $g=2$, so that $\Gamma[m] = \Sp_{4}(\Z)[m]$ throughout, and fix $\Sigma$ to be the canonical (perfect cone) admissible collection.   
By its definition as a colimit (Definition~\ref{def:trop-moduli-space}), the space $A_2[m]^\trop$ is obtained from the disjoint union
 \[\bigsqcup_{P} |\Sigma(P)|,\]
where $P$ runs over all isotropic subspaces of $\ZZ^{4}$, by quotienting along all maps
\begin{equation}\gamma\col |\Sigma(P')| \to |\Sigma(P)|\end{equation} induced by inclusions $\gamma \cdot P'\subseteq P$, as $\gamma$ ranges over the group $\Gamma[m]$.  

Each $\Sigma(P)$ is a polyhedral fan whose support is $\PD(P^\vee)^{\rt}$.  If $P$ has rank $2$, then $\Sigma(P)$ is as described in Proposition~\ref{prop:perf-2}; if $P$ has rank $1$ then $\Sigma(P)$ is a single ray and if $P=\{0\}$ then $\Sigma(P)$ is a point.   

We now label all the rays in the fans $\Sigma(P)$, as $P$ ranges over all isotropic subspaces of $\ZZ^4$, with labels drawn from the set $(\ZZ/m\ZZ)^4/\{\pm 1\}$.  For any $P$ and any ray $\rho$ of $\Sigma(P)$, the lattice point generator of $\rho$ is of the form $vv^t$ for some primitive $v \in P$, where $v$ is uniquely determined up to sign.  This assertion is straightforward when $\dim P = 1$ and is justified by Proposition~\ref{prop:perf-2} when $\dim P = 2$. 

\begin{notation}
    With the notation above, we label the ray $\rho$ by $\ov{v} \in (\ZZ/m\ZZ)^4/\{\pm 1\}$.
\end{notation}

\begin{proposition}\label{prop:Gamma-m-preserves-labels}
Let $P,P' \subset \Z^4$ be isotropic subspaces; we specifically allow $P = P'$. Consider two perfect cones $\sigma\in \Sigma(P)$ and $\sigma'\in \Sigma(P')$.  If $\alpha$ is a bijection from the set of rays of $\sigma'$ to the set of rays of $\sigma$ that preserves their labels, then there exists $\gamma \in \Gamma[m]$ taking $\sigma'$ isomorphically to $\sigma$ such that $\gamma$ induces $\alpha$. Conversely, for any $\gamma\in \Gamma[m]$ taking $\sigma'$ isomorphically to $\sigma$, the induced bijection between the sets of rays of $\sigma'$ and rays of $\sigma$ is label-preserving.
\end{proposition}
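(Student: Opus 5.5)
The converse direction is immediate, so I handle it first. If $\gamma\in\Gamma[m]$ carries $\sigma'$ isomorphically onto $\sigma$, then it sends the ray generated by $vv^t$ (with $v\in P'$ primitive) to the ray generated by $(\gamma v)(\gamma v)^t$. Since $\gamma\equiv \mathrm{id}\pmod m$, we have $\overline{\gamma v}=\overline{v}$, so labels are preserved.

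For the main direction, I split into cases according to the dimension of $\sigma$, which equals that of $\sigma'$ because $\alpha$ is a bijection of rays. In every case I first choose lifts. Pick minimal vectors $v_1',\dots,v_k'$ for the rays of $\sigma'$, and minimal vectors $v_1,\dots,v_k$ for the corresponding rays of $\sigma$, with signs chosen so that $\overline{v_i}=\overline{v_i'}$ in $(\ZZ/m\ZZ)^4$. This is possible since the labels agree up to sign.

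\begin{itemize}
\item[(i)] If $\dim\sigma=1$, we have primitive isotropic vectors $v,v'\in\ZZ^4$ with $\overline v=\overline{v'}$.
\item[(ii)] If $\dim\sigma=2$, then by Proposition~\ref{prop:perf-2} the pairs $v_1,v_2$ and $v_1',v_2'$ are $\ZZ$-bases of the rank-2 isotropic subspace spanned by the cone (namely $P$, resp.\ $P'$, or a rank-2 isotropic subspace containing it), with $\overline{v_i}=\overline{v_i'}$.
\item[(iii)] If $\dim\sigma=3$, the rays are $v_1,v_2,v_1+v_2$ up to sign. The relation $\pm v_1\pm v_2\pm v_3=0$ among the chosen lifts forces compatible signs. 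Possibly after negating, the third vector on each side is $v_1+v_2$ or $v_1-v_2$; since $\alpha$ preserves labels, the mod $m$ relations match. I expect to reduce to a basis pair with $\overline{v_3}=\overline{v_1}+\overline{v_2}$ on both sides by replacing $v_2$ with $-v_2$ where needed. Some care is needed when $m=2$, or when labels coincide, so that this is a consistent choice.
\end{itemize}

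The key step is the following lifting statement. Given two tuples $(v_1,\dots,v_k)$ and $(v_1',\dots,v_k')$, $k\le 2$, each extendable to a basis of an isotropic subspace of $\ZZ^4$, with $v_i\equiv v_i'\pmod m$, I want $\gamma\in\Gamma[m]$ with $\gamma v_i'=v_i$.

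By Witt/symplectic basis extension over $\ZZ$, there are $\delta,\delta'\in\Sp_4(\ZZ)$ with $\delta e_i=v_i$ and $\delta' e_i=v_i'$. Then $\eta=\delta^{-1}\delta'$ reduces mod $m$ to an element $\overline\eta$ fixing $\overline{e_1},\dots,\overline{e_k}$. I need to correct $\eta$ by an element of the stabilizer $\mathrm{Stab}_{\Sp_4(\ZZ)}(e_1,\dots,e_k)$ so that the product lies in $\Gamma[m]$. This amounts to surjectivity of
\[
\mathrm{Stab}_{\Sp_4(\ZZ)}(e_1,\dots,e_k)\to \mathrm{Stab}_{\Sp_4(\ZZ/m\ZZ)}(\overline{e_1},\dots,\overline{e_k}),
\]
which is the main obstacle. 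These stabilizers are parabolic-type groups: a unipotent radical (symmetric matrices and Heisenberg-type parts, which surject trivially) extended by $\Sp_{2(2-k)}$. Surjectivity of their mod $m$ reduction follows from \cite[Theorem 1]{newman-smart} applied to the Levi factor $\Sp_{2(2-k)}$ (or from $\SL_2$ strong approximation), together with the evident surjectivity on unipotent entries. I would write the stabilizer explicitly in block form and check this directly.

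Given the lifting statement, $\gamma=\delta\,\eta'\,\delta'^{-1}$ for a suitable corrected $\eta'$ lies in $\Gamma[m]$ and satisfies $\gamma v_i'=v_i$. It then carries the span of the rays of $\sigma'$ to that of $\sigma$, hence carries $\gamma P'$ into $P$ (or the relevant isotropic plane). Since $\gamma$ is linear and maps generators $v_i'v_i'^t$ to $v_iv_i^t$ (and in case (iii) $v_1'+v_2'$ to $v_1+v_2$), it maps $\sigma'$ onto $\sigma$ and induces exactly $\alpha$.
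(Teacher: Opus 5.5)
Your proposal is correct and follows essentially the paper's proof. The converse uses that $\Gamma[m]$ acts trivially mod $m$. For the main direction, the lifting statement you isolate is exactly Lemma~\ref{lemma:Gamma-m-transitive-on-bases-equal-mod-m}, and the paper proves it the same way: Witt extension over $\ZZ$, then lifting the mod $m$ pointwise stabilizer through its Levi and unipotent parts via \cite[Theorem 1]{newman-smart}.

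The sign bookkeeping you leave as an expectation in case (iii) is settled as in the paper, and it is worth writing out.
\begin{itemize}
\item For $m>2$: suppose the integer relations $v_3=\pm v_1\pm v_2$ and $v_3'=\pm v_1'\pm v_2'$ had different sign patterns. Then the congruences $v_i\equiv v_i' \pmod m$ would force $2\overline{u}=0$ for some $\overline{u}\in\{\overline{v_1},\overline{v_2},\overline{v_3}\}$. This is impossible, because these vectors are primitive.
\item For $m\le 2$: you may renormalize signs freely on one side, since $-1\equiv 1 \pmod m$.
\end{itemize}
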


Before we proceed to the proof of this proposition, we record the following lemma.

\begin{lemma}\label{lemma:Gamma-m-transitive-on-bases-equal-mod-m}
    Let $g,m$ be positive integers. Let $V,W \subset \Z^{2g}$ be isotropic subspaces having the same rank $t$, and let $v_1,\dots,v_t \in \Z^{2g}$ and $w_1,\dots,w_t \in \Z^{2g}$ be $\Z$-bases for $V$ and $W$, respectively. If $v_i \equiv w_i$ mod $m$ for all $i$, then there exists $\gamma \in \Gamma[m]$ such that $\gamma \cdot v_i = w_i$ for all $i$. 
\end{lemma}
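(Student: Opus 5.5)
The plan is to reduce to a standard configuration using the transitivity of $\Sp_{2g}(\Z)$ on bases of isotropic subspaces, and then correct by an element of $\Gamma[m]$ built inside a stabilizer.

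\textbf{Step 1 (move $V$ to $W$).} Since $v_1,\dots,v_t$ is a $\Z$-basis of an isotropic direct summand, we extend it to a symplectic basis of $\Z^{2g}$; do the same for $w_1,\dots,w_t$. This gives $\delta\in\Sp_{2g}(\Z)$ with $\delta v_i=w_i$ for all $i$. After replacing $V$ by $W$ and composing with a further element of $\Sp_{2g}(\Z)$, we may assume $w_i=e_i$ for $i\le t$. The obstruction is that $\delta$ need not lie in $\Gamma[m]$. Its reduction $\bar\delta\in\Sp_{2g}(\Z/m\Z)$ fixes each $\bar e_i=\bar w_i$, because $\bar v_i=\bar w_i$. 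After conjugating, we are left with the following reduced problem: given $\bar\delta$ in the stabilizer $H\subset \Sp_{2g}(\Z/m\Z)$ of $\bar e_1,\dots,\bar e_t$, find $\eta\in\Sp_{2g}(\Z)$ fixing $e_1,\dots,e_t$ with $\bar\eta=\bar\delta$. Then $\gamma=\eta^{-1}\delta$ lies in $\Gamma[m]$ and satisfies $\gamma v_i=\eta^{-1}w_i=w_i$.

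\textbf{Step 2 (lift the stabilizer).} This is the main obstacle: we need the stabilizer of $e_1,\dots,e_t$ in $\Sp_{2g}(\Z)$ to surject onto $H$. To see this, describe the stabilizer explicitly. Use the ordered basis $(e_1,\dots,e_t;\ e_{t+1},\dots,e_g,f_{t+1},\dots,f_g;\ f_1,\dots,f_t)$. In this basis, a symplectic matrix fixing the first $t$ vectors has block form
\[
\begin{pmatrix} I & * & * \\ 0 & A & * \\ 0 & 0 & I\end{pmatrix}.
\]
The middle block satisfies $A\in\Sp_{2(g-t)}$. The starred blocks form a Heisenberg-type group: the $(1,2)$ block $x$ is determined by the $(2,3)$ block $y$ and $A$, and the $(1,3)$ block is a matrix $z$ whose symmetric part is constrained by $x$ and $y$. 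The same description holds over $\Z/m\Z$, with the diagonal blocks equal to $I$ forced by the symplectic condition. So $H$ is a semidirect product $\Sp_{2(g-t)}(\Z/m\Z)\ltimes U(\Z/m\Z)$, where $U$ is unipotent.

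Surjectivity then follows in two parts. First, $\Sp_{2(g-t)}(\Z)\to\Sp_{2(g-t)}(\Z/m\Z)$ is surjective by~\cite[Theorem 1]{newman-smart}. Second, $U(\Z)\to U(\Z/m\Z)$ is surjective because $U$ is parametrized by affine coordinates: arbitrary $y$, and $z$ whose symmetric part is prescribed. Concretely, lift $y$ arbitrarily, lift the $z$ entries compatibly with the equations, and the resulting integer matrix is symplectic.

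\textbf{Alternative for Step 2.} If the block bookkeeping gets messy, use an inductive variant. Treat $t=1$ by showing that the stabilizer of $e_1$ surjects using the generators above, and then induct on $t$ by passing to $e_1^\perp/e_1\cong\Z^{2g-2}$. In either version, I expect the explicit lifting of the unipotent radical to be the only nonformal point.
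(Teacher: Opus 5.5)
Your proposal is correct and is essentially the paper's proof. You take an element of $\Sp_{2g}(\Z)$ carrying the $v_i$ to the $w_i$, then cancel its reduction mod $m$ by an integral lift inside the stabilizer of $e_1,\dots,e_t$, built from the Levi--unipotent decomposition, Newman--Smart for the $\Sp_{2(g-t)}$ factor, and entrywise lifting of the unipotent radical. Fixing the $w_i$ and multiplying on the left, rather than fixing the $v_i$ and multiplying on the right as the paper does, is cosmetic.

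One bookkeeping slip: the symplectic condition constrains the \emph{antisymmetric} part of the $(1,3)$ block, namely $z - {}^tz = {}^ty\,J'y$ with $J'$ the standard form on the middle block, and leaves the symmetric part free. This is the paper's requirement that $F + H\cdot{}^tE$ be symmetric, and it is why the lift works: lift $y$ freely, then adjust $z$ by an integral symmetric lift.
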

\begin{proof}
    Pick an element $\gamma_1 \in \Sp_{2g}(\Z)$ satisfying $\gamma_1 \cdot v_i = w_i$ for all $i$. Such $\gamma_1$ exists by Witt's extension theorem over $\Z$, which states that any isometry from $V$ to $W$ can be extended to an element of $\Sp_{2g}(\Z)$.
    
    Reducing mod $m$, the element $\ov{\gamma_1} \in \Sp_{2g}(\Z/m\Z)$ satisfies $\ov{\gamma_1} \cdot \ov{v_i} = \ov{w_i} = \ov{v_i}$ for all $i$. We will next show there exists $\gamma_2 \in \Sp_{2g}(\Z)$ such that $\gamma_2 \cdot v_i = v_i$ for all $i$ and $\ov{\gamma_2} = \ov{\gamma_1}^{-1} \in \Sp_{2g}(\Z/m\Z)$. Once we have this element, we can set $\gamma = \gamma_1 \cdot \gamma_2 \in \Gamma[m]$ and we are done.

    We construct $\gamma_2$ via an appropriate lift of $\ov{\gamma_1}^{-1}$ to $\Sp_{2g}(\Z)$. We can extend $v_1,\dots,v_t \in \Z^{2g}$ to a symplectic basis, so we can assume without loss of generality that $v_i = e_i$ in standard symplectic coordinates on $\Z^{2g}$. 
    The stabilizer of an isotropic space is the semidirect product of its Levi component and its unipotent radical \cite[Theorem III.3.10]{amrt}.
    We apply this general fact to the element $\ov{\gamma_1}^{-1} \in \Sp_{2g}(\Z/m\Z)$, which fixes $\ov{v_i} = \ov{e_i}$, and obtain the following product decomposition:
    $$\ov{\gamma_1}^{-1} \; = \; 
    \begin{pmatrix} 
    \id & 0 & 0 & 0 \\ 
    0 & a & 0 & b\\
    0 & 0 & \id & 0\\
    0 & c & 0 & d
    \end{pmatrix} \cdot \begin{pmatrix} 
    \id & e & f & h \\ 
    0 & \id & {}^th & 0\\
    0 & 0 & \id & 0\\
    0 & 0 & -{}^te & \id
    \end{pmatrix}.$$
    Here, the two matrices are written in block form, with the sizes of the vertical/horizontal blocks being $t, g-t, t, g-t$. Each entry denotes a matrix with entries in $\Z/m\Z$ and, furthermore, we know that $\left(\begin{smallmatrix} a & b \\ c & d \end{smallmatrix} \right) \in \Sp_{2(g-t)}(\Z/m\Z)$ and $f + h \cdot {}^te \in \Mat_{t}(\Z/m\Z)$ is symmetric. 
    
    By~\cite[Theorem 1]{newman-smart}, the mod $m$ reduction map $\Sp_{2(g-t)}(\Z) \to \Sp_{2(g-t)}(\Z/m\Z)$ is surjective, so there exists a lift $\left(\begin{smallmatrix} A & B \\ C & D \end{smallmatrix} \right) \in \Sp_{2(g-t)}(\Z)$ of $\left(\begin{smallmatrix} a & b \\ c & d \end{smallmatrix} \right)$. We also lift the blocks $e,f,h$ to integer matrices $E,F,H$ that satisfy the analogous condition that $F + H \cdot {}^tE \in \Mat_{t}(\Z)$ is symmetric. We can guarantee this condition holds as follows: lift the blocks $e,h$ to integer matrices $E,H$ by lifting each entry freely; then, lift the symmetric matrix $f + h \cdot {}^te$ to a symmetric matrix $M \in \Mat_{t}(\Z)$; finally, set $F = M - H \cdot {}^tE$. We obtain the matrix
    $$\gamma_2 \; := \; \begin{pmatrix} 
    \id & 0 & 0 & 0 \\ 
    0 & A & 0 & B\\
    0 & 0 & \id & 0\\
    0 & C & 0 & D
    \end{pmatrix} \cdot \begin{pmatrix} 
    \id & E & F & H \\ 
    0 & \id & {}^tH & 0\\
    0 & 0 & \id & 0\\
    0 & 0 & -{}^tE & \id
    \end{pmatrix} \; \in \; \Sp_{2g}(\Z)$$
    having the desired properties.
\end{proof}

\begin{proof}[Proof of Proposition~\ref{prop:Gamma-m-preserves-labels}]
The second statement follows from the fact that the action of $\Gamma[m]$ on $\ZZ^4$ leaves reductions mod $m$ invariant.  

    Now we prove the first statement. First, if $\sigma'$ and $\sigma$ each have at most two rays, then the claim follows from Lemma~\ref{lemma:Gamma-m-transitive-on-bases-equal-mod-m}.  So suppose then that $\sigma'$ and $\sigma$ are both $3$-dimensional, with ray generators given by
    \[v_1'v_1'^t, \,v_2'v_2'^t,\,v_3'v_3'^t\quad\text{and}\quad v_1v_1^t, \,v_2v_2^t,\,v_3v_3^t\]
    respectively, for $v_i'\in P'$ and $v_i \in P$. We may assume that the ray bijection $\alpha$ takes $\R_{\geq 0} \langle v_i'v_i'^t\rangle$ to $\R_{\geq 0} \langle v_iv_i^t\rangle$ for $i=1,2,3$.

By Proposition~\ref{prop:perf-2}, we can assume after changing the signs of the vectors that
    \begin{equation}\label{eq:ray-is-sum-of-other-2}
        v_3' \; = \; v_1' \; + \; v_2'.
    \end{equation}
    
    Moreover, since $\alpha$ is label-preserving, after changing the signs of the generators $v_1,v_2,v_3$ if necessary, we get \begin{equation}\label{eq:generators-equal-mod-m}v_i' \equiv v_i \mod m, \quad \; \text{for } \; i=1,2,3.\end{equation}
    Proposition~\ref{prop:perf-2} implies that 
    $v_3 = \pm v_1 \pm v_2$.
In fact, we can assume     \begin{equation}\label{eq:ray-is-sum-of-other-2-duplicate}
        v_3 \; = \; v_1 \; + \; v_2.
    \end{equation} without changing either of the equalities~\eqref{eq:ray-is-sum-of-other-2} or~\eqref{eq:generators-equal-mod-m}. Indeed, if $m = 1, 2$, we can change the signs of $v_1$ and $v_2$ freely without changing~\eqref{eq:generators-equal-mod-m}. If $m > 2$, then equations~\eqref{eq:ray-is-sum-of-other-2} and~\eqref{eq:generators-equal-mod-m}, together with the fact that all chosen generators reduce to primitive vectors in $(\Z/m\Z)^4$, imply that $v_3 = v_1 + v_2$ as desired.

Proposition~\ref{prop:perf-2} implies that $v_1',v_2'$ are a $\ZZ$-basis for $P'$ and $v_1,v_2$ are a $\ZZ$-basis for $P$.  By Lemma~\ref{lemma:Gamma-m-transitive-on-bases-equal-mod-m}, there exists $\gamma \in \Gamma[m]$ such that $\gamma(v_i') = v_i$ for $i=1,2$.  Then~\eqref{eq:ray-is-sum-of-other-2} and~\eqref{eq:ray-is-sum-of-other-2-duplicate} imply that $\gamma(v_3') = v_3$ also. 
\end{proof}

\begin{remark}\label{rem:honest-cc}
    For $m = 1$, all labels are identical since $(\Z/1 \Z)^4$ is trivial. For $m \geq 2$, the rays of any perfect cone have pairwise distinct labels. This follows, for example, from the description of the perfect cones in Proposition~\ref{prop:perf-2}. 
    As a consequence, the space $A_2[m]^{\trop}$ is an honest cone complex and not merely a generalized cone complex if and only if $m\ge 2$. That is, if $m\ge 2$, then no cone is glued to itself via non-trivial automorphisms.
\end{remark}

\subsection{Simplicial complex structure on the link} 

We continue to fix $g=2$ and fix the perfect cone decomposition.  Every perfect cone is simplicial when $g\le 2$. Therefore, for $m \geq 2$, the {\em link}
\[LA_2[m]^\trop := (A_2[m]^\trop \setminus\{0\}) / \, \R_{>0}\]
is a simplicial complex.  Using the ray labeling in the previous section and its properties proved in Proposition~\ref{prop:Gamma-m-preserves-labels}, we give an explicit finite description of this simplicial complex, which is one of the main theorems of this paper.

Say that an element $(v_1,\dots,v_n) \in (\Z/m\Z)^n$ is \emph{primitive} if the ideal generated by $v_1,\dots,v_n$ is the whole ring $\Z/m\Z$. Equivalently, an element in $(\ZZ/m\ZZ)^{n}$ is primitive if it generates a subspace of $(\ZZ/m\ZZ)^{n}$ of dimension $1$.

\begin{theorem} \label{thm:simplicial-complex-description}
Fix $m \geq 2$. The link $LA_2[m]^\trop$ is isomorphic, via the ray labeling from Section~\ref{sec:gluing-pattern}, to the $2$-dimensional simplicial complex $\Delta_2[m]$ defined as follows.
\begin{enumerate}
    \item[(a)] The vertex set is the set of primitive elements in $(\Z/m\Z)^4/\{\pm 1\}$.
    \item[(b)] For $v \in (\ZZ/m\ZZ)^4$, write $[v]$ for its equivalence class up to sign. Then two vertices $[v]$ and $[w]$ form an edge if and only if $\{v,w\}$ is a $(\ZZ/m\ZZ)$-basis for an isotropic plane in $(\Z/m\Z)^4$.
    \item[(c)] Three vertices $[u],$ $[v]$, and $[w]$ form a 2-simplex if and only if each pair forms an edge and there exists a choice of signs such that \[\pm u \pm v \pm w = 0.\]
\end{enumerate}
\end{theorem}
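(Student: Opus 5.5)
We define a map from the simplices of $LA_2[m]^\trop$ to $\Delta_2[m]$ by sending each perfect cone to the set of labels of its rays. We then show that this map is well defined, lands in $\Delta_2[m]$, is injective, and is surjective on simplices of each dimension.

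\emph{Well-definedness and landing in $\Delta_2[m]$.} The cells of $LA_2[m]^\trop$ are the perfect cones $\sigma\in\Sigma(P)$, up to identification by the gluing maps $\gamma\in\Gamma[m]$. The converse part of Proposition~\ref{prop:Gamma-m-preserves-labels} says that gluing maps preserve labels, so the label set of a simplex is well defined. By Remark~\ref{rem:honest-cc}, the labels of a cone's rays are pairwise distinct, so the label set has the correct cardinality.

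We check that the label sets lie in $\Delta_2[m]$, using Proposition~\ref{prop:perf-2}:
\begin{itemize}
\item[(a)] A ray is generated by $vv^t$ with $v$ primitive in $\Z^4$, so $\ov v$ is primitive in $(\Z/m\Z)^4$.
\item[(b)] A $2$-cone has rays $vv^t, ww^t$ with $\{v,w\}$ a $\Z$-basis of an isotropic plane $P$. A $2$-cone in $\Sigma(P)$ has rank $2$, and $P$ is a direct summand with free complement, so $\{\ov v,\ov w\}$ is a basis of the isotropic plane $\ov P$.
\item[(c)] A $3$-cone has rays $v,w,v+w$, and these satisfy the sign relation.
\end{itemize}

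\emph{Injectivity.} Suppose two cones $\sigma,\sigma'$ have the same label set. Then there is a label-preserving bijection of their rays. The first part of Proposition~\ref{prop:Gamma-m-preserves-labels} gives $\gamma\in\Gamma[m]$ carrying $\sigma'$ to $\sigma$, so the two cones are identified in the colimit. The same proposition shows that faces are matched compatibly, so the map is a simplicial isomorphism onto its image. We should also check that the colimit introduces no identifications beyond those coming from single elements $\gamma$. Every gluing arrow is induced by some $\gamma\in\Gamma[m]$ with $\gamma P'\subseteq P$, and compositions of such arrows are again induced by elements of $\Gamma[m]$. Hence two cells are identified exactly when some $\gamma$ carries one onto the other, and this is what Proposition~\ref{prop:Gamma-m-preserves-labels} controls.

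\emph{Surjectivity.} We lift simplices of $\Delta_2[m]$ using Lemma~\ref{lemma:isotropic-basis-lifts}.
\begin{itemize}
\item Vertex: given a primitive $a$, it spans a rank-$1$ isotropic subspace, since every line is isotropic for an alternating form. Lift it to a primitive $v\in\Z^4$.
\item Edge: given a basis $\{a,b\}$ of an isotropic plane, lift it to a $\Z$-basis $v,w$ of an isotropic plane $P$. Then $\R_{\ge0}\langle vv^t,ww^t\rangle\in\Sigma(P)$ is a $2$-cone with the required labels.
\item Triangle: given $[u],[v],[w]$ with $\pm u\pm v\pm w=0$, choose signs so that $w=u+v$. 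Here $\{u,v\}$ is a basis of an isotropic plane because $\{[u],[v]\}$ is an edge. Lift $u,v$ to a $\Z$-basis $\tilde u,\tilde v$ of an isotropic plane $P$. Then the cone spanned by $\tilde u,\tilde v,\tilde u+\tilde v$ is a perfect $3$-cone whose labels are $[u],[v],[w]$.
\end{itemize}

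\emph{Main obstacle.} The delicate point is the interaction between signs and labels in the triangle case. The lift $\tilde u+\tilde v$ has label $[u+v]$, and this must agree with the prescribed $[w]$. The relation $\pm u\pm v\pm w=0$ guarantees this only up to the sign choice, and the sign choice is free precisely because labels are taken modulo $\pm1$. This sign bookkeeping is the same issue handled in the proof of Proposition~\ref{prop:Gamma-m-preserves-labels}.
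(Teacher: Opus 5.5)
Your proposal is correct and follows essentially the same route as the paper. The label map is well defined and injective by the two halves of Proposition~\ref{prop:Gamma-m-preserves-labels}, and it is surjective by lifting isotropic bases via Lemma~\ref{lemma:isotropic-basis-lifts} after normalizing signs so that $w=u+v$; the only difference is that you check the image lies in $\Delta_2[m]$ and verify surjectivity on vertices and edges directly, whereas the paper reduces to $2$-simplices.
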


\begin{proof}
The description of perfect cones of rank $2$ in Proposition~\ref{prop:perf-2} gives a natural map of simplicial complexes 
$\bigsqcup_P L\Sigma(P) \to \Delta_2[m]$.  The two halves of Proposition~\ref{prop:Gamma-m-preserves-labels} show that this map descends to a well-defined and injective map of simplicial complexes \[LA_2[m]^\trop \to \Delta_2[m],\]
which, we now claim, is also surjective.

It suffices to prove that the map is surjective onto the $2$-simplices of $\Delta_2[m]$.  Suppose $\sigma$ is a $2$-simplex in $\Delta_2[m]$, with vertices $[u]$, $[v]$, and $[w]$, for some $u,v,w\in (\ZZ/m\ZZ)^4$. Changing signs if necessary, we may assume that $w = u+v$.  Since $u$ and $v$ are a $\ZZ/m\ZZ$-basis for an isotropic plane, it follows from Lemma~\ref{lemma:isotropic-basis-lifts} that they have lifts $\tilde u$ and $\tilde v \in \ZZ^4$ that are a $\ZZ$-basis for an isotropic plane $P$ of $\ZZ^4$.  Then the perfect cone in $\Sigma(P)$ with rays $\tilde{u}\tilde{u}^t, \tilde{v}\tilde{v}^t, (\tilde{u}+\tilde{v})(\tilde{u}+\tilde{v})^t$ is sent to $\sigma$, showing surjectivity.
\end{proof}

\begin{remark} \label{rem:edge-contained-in-triangles}
    For later use, we record that when $m > 2$, every edge in $LA_2[m]^\trop$ is contained in exactly two $2$-simplices.  Indeed, we established $LA_2[m]^\trop \cong \Delta_2[m]$, and in $\Delta_2[m]$, the edge spanned  by $[u]$ and $[v]$, for $u,v\in (\ZZ/m\ZZ)^4$, is contained in exactly two $2$-simplices, having vertices labeled 
    $$[u], \; [v], \; [u+v] \quad \text{ and } \quad [u], \; [v], \; [u-v]\,$$
    respectively.  If instead $m = 2$, then $u+v = u-v$, so  there is a unique 2-simplex in $LA_2[m]^\trop$ containing each edge.  
\end{remark}

\subsection{The case \texorpdfstring{$m = 2$}{m2}}

We use Theorem~\ref{thm:simplicial-complex-description} to give a concrete cellular description of the link of $A_2[2]^\trop$ and deduce the weight zero cohomology of $\cA_2[2]$ in Corollary~\ref{cor:A22}.

\begin{corollary} \label{cor:link-A22}
    The link of $A_2[2]^\trop$ is a simplicial complex of pure dimension $2$, having $15$ vertices, $45$ edges, and $15$ triangles, drawn in Figure~\ref{fig:a22}.
\end{corollary}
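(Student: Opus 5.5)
The plan is to apply Theorem~\ref{thm:simplicial-complex-description} with $m=2$ and count the cells of $\Delta_2[2]$ directly over $\FF_2$. Since $-1=1$ in $\ZZ/2\ZZ$, the sign quotient is trivial, and primitive vectors are just nonzero vectors. So the vertex set is $\FF_2^4\setminus\{0\}$, giving $2^4-1=15$ vertices.

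For edges, note that every vector is isotropic with itself because $J$ is alternating. A pair $\{v,w\}$ of distinct nonzero vectors spans a plane, so it is an edge exactly when $J(v,w)=0$, i.e.\ when $w\in v^\perp\setminus\{0,v\}$. Since $v^\perp$ is $3$-dimensional, it has $7$ nonzero vectors, so each vertex has $6$ neighbours. This gives $15\cdot 6/2=45$ edges. As a cross-check, there are $15$ isotropic (Lagrangian) planes in $\FF_2^4$, which is the number of points of the symplectic generalized quadrangle $W(2)$. Each such plane contains $\binom{3}{2}=3$ bases up to order, again giving $45$.

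For triangles, Theorem~\ref{thm:simplicial-complex-description}(c) together with Remark~\ref{rem:edge-contained-in-triangles} says that the edge $\{u,v\}$ lies in the unique $2$-simplex $\{u,v,u+v\}$. That triangle is exactly the set of three nonzero vectors of the isotropic plane $\langle u,v\rangle$. Hence the triangles are in bijection with isotropic planes, giving $15$ of them, each with $3$ edges. Since $15\cdot 3=45$, every edge lies in exactly one triangle.

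Purity of dimension $2$ then follows. Every edge lies in a triangle, and every vertex $v$ lies in an edge, since $v^\perp$ contains vectors other than $0$ and $v$. The main thing to check carefully is the count of $15$ Lagrangian planes; I would justify it by the formula $(2^2+1)(2+1)=15$, or equivalently $15\cdot 7/(3\cdot 3)$ from counting flags (vertex, isotropic plane containing it) in two ways. The figure is then just a drawing of this incidence structure.
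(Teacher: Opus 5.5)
Your proposal is correct and follows essentially the same route as the paper: specialize Theorem~\ref{thm:simplicial-complex-description} to $m=2$, identify the vertices with the $15$ nonzero vectors of $(\ZZ/2\ZZ)^4$, and identify the triangles with the $15$ isotropic planes (the paper reads this count off the enumeration in Figure~\ref{fig:a22}; it is also the $m=2$ case of Lemma~\ref{lem:subspacecount}), with your $v^\perp$ edge count and purity check making explicit what the paper leaves implicit. The one slip is your alternative count $15\cdot 7/(3\cdot 3)$, which is not even an integer; the correct flag count uses that each nonzero $v$ lies in exactly $3$ isotropic planes (the lines of $v^\perp/\langle v\rangle$), giving $15\cdot 3/3=15$, or simply $45/3=15$ from your edge count.
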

\begin{proof}
    By Theorem~\ref{thm:simplicial-complex-description}(a), the vertices are in bijection with the non-zero elements of $(\Z/2\Z)^4$. Moreover, $(\Z/2\Z)^4$ contains $15$ isotropic planes, enumerated in Figure~\ref{fig:a22} (right). It then follows from Theorem~\ref{thm:simplicial-complex-description}(c) that three vertices form a triangle precisely when the corresponding elements of $(\Z/2\Z)^4$ are the three non-zero vectors in an isotropic plane.
\end{proof}

\begin{figure}[h]
\centering
\begin{subfigure}{.53\textwidth}
  \centering
  \includegraphics[width=.9\linewidth]{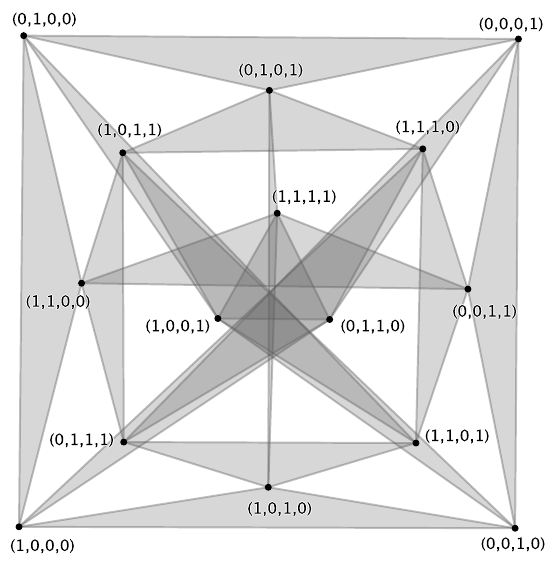}
\end{subfigure}
\begin{subfigure}{.45\textwidth}
  \centering
  \includegraphics[width=.67\linewidth]{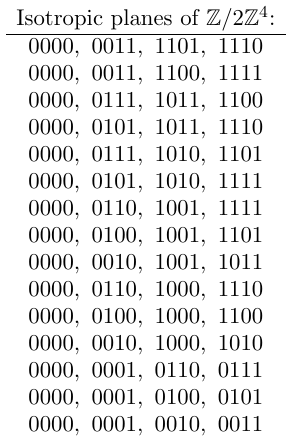}
\end{subfigure}
\caption{Left: the link of $A_2[2]^{\mathrm{trop}}$. Right: the 15 isotropic planes of $(\ZZ/2\ZZ)^4$.} 

\label{fig:a22}
\end{figure}

\begin{corollary}
\label{cor:A22}
    The weight zero compactly supported cohomology of $\cA_2[2]$ is
    $$
    W_0 H^2_c(\cA_2[2]; \QQ) \; \cong \; \mathbb{Q}^{16},
    $$
    and is 0 in all other degrees.
\end{corollary}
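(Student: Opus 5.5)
We reduce to a homology computation on the link, using the isomorphism $W_0 H_c^\ast(\cA_2[2];\Q)\cong H^\ast_c(A_2[2]^\trop;\Q)$ from \cite[Theorem~1.19]{ABBCV}. Since $A_2[2]^\trop$ is the cone over the link $LA_2[2]^\trop$ with cone point $0$, the compactly supported cohomology of the open cone is the reduced cohomology of the link, shifted by one:
\[
H^k_c(A_2[2]^\trop;\Q)\;\cong\;\widetilde H^{k-1}(LA_2[2]^\trop;\Q).
\]
(This is the standard fact for cones over compact spaces: the one-point compactification is the suspension of the link.) It therefore suffices to show that the link has $\widetilde H^1\cong\Q^{16}$ and vanishing reduced cohomology in all other degrees.

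By Corollary~\ref{cor:link-A22}, the link is a $2$-dimensional simplicial complex with $f$-vector $(15,45,15)$, so its Euler characteristic is $15-45+15=-15$. Hence $\widetilde\chi=-16$. It remains to show two things: the complex is connected, so $\widetilde H^0=0$, and $H^2=0$. Then $\dim H^1 = 16$ follows from the Euler characteristic.

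Connectivity follows from the combinatorics. Every nonzero vector in $(\Z/2\Z)^4$ lies in some isotropic plane, and any two nonzero vectors $u,w$ can be joined by a path of edges. If $J(u,w)=0$ and $u\neq w$, they are adjacent by Theorem~\ref{thm:simplicial-complex-description}(b). Otherwise, choose a nonzero $x\neq u,w$ with $J(u,x)=J(w,x)=0$. Such an $x$ exists because $\langle u,w\rangle^{\perp}$ is $2$-dimensional and so has three nonzero vectors; these cannot be $u$ or $w$, since $J(u,w)\neq 0$.

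For $H^2=0$, I would observe that by Remark~\ref{rem:edge-contained-in-triangles} (the $m=2$ case) each edge lies in exactly one triangle. Every triangle therefore has a free edge, and no nonzero $2$-cycle exists with $\Z/2$ or $\Q$ coefficients: a $2$-cycle must have zero coefficient on any triangle containing an edge that lies in no other triangle. Thus $H_2=0$, and so $H^2=0$ over $\Q$.

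The main obstacle is justifying the cone/suspension identification with the Satake topology rather than the naive one. I would cite \cite{ABBCV}, where this identification of $H_c^\ast$ with the reduced cohomology of the link is presumably recorded. The combinatorial steps are routine.
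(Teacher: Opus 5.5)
Your proof is correct and is essentially the paper's argument: both reduce via \cite[Theorem~1.19]{ABBCV} to $\widetilde H^{\ast-1}$ of the link, and then use the $f$-vector $(15,45,15)$ together with the fact that every edge lies in exactly one triangle. The paper phrases the last step homotopically, retracting each triangle onto a tripod to get a connected graph with $30$ vertices and $45$ edges, hence a wedge of $16$ circles (a slightly stronger, homotopy-level conclusion); you instead use $\chi=-15$ and $H_2=0$, and you give an explicit connectivity argument that the paper leaves implicit.
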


\begin{proof}
By \cite[Theorem 1.19]{ABBCV}, there is a canonical isomorphism
\begin{equation}\label{eq:comparison for Agm} 
W_0 H_c^i(\cA_g[m];  \Q) 
\; \cong \; \tilde{H}^{i-1}(LA_g[m]^\trop; \Q),
\end{equation}
for all $i \geq 0$, where $LA_g[m]^\trop$ denotes the link of $A_g[m]^\trop$.
We can deformation retract each triangle from Figure~\ref{fig:a22} into a tripod (i.e., a tripod-shaped graph with $4$ vertices and $3$ edges). The link of $A_2[2]^\trop$ is thus homotopy equivalent to a connected graph with $30$ vertices and $45$ edges, and hence also to a wedge of $16$ circles. The  claim  then follows.
\end{proof}

\subsection{The case \texorpdfstring{$m > 2$}{m bigger than 2}} More generally, using Theorem~\ref{thm:simplicial-complex-description}, we count the simplices in the link of $A_2[m]^\trop$ for $m > 2$.

\begin{lemma}\label{lem:subspacecount}
    Fix $m \geq 2$. In $(\Z/m\Z)^4$, there are 
    $$\frac{m^4}{\phi(m)} \cdot \prod\nolimits_{p \mid m} (1 - p^{-4})$$ isotropic lines, and the same number of isotropic planes. 
    Each isotropic plane contains $$m \cdot \prod\nolimits_{p \mid m} (1 + p^{-1})$$ isotropic lines. 
\end{lemma}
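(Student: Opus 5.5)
Every isotropic line or plane in $(\ZZ/m\ZZ)^4$ is a free direct summand, so the count is multiplicative in $m$. I will use the Chinese remainder theorem to reduce to the case $m = p^k$ a prime power. Write $m=\prod p^{k_p}$. Then $(\ZZ/m\ZZ)^4 \cong \prod_p (\ZZ/p^{k_p}\ZZ)^4$ compatibly with $J$. A submodule that is a free rank-$t$ direct summand and isotropic corresponds exactly to a tuple of such submodules in each factor. Since $\phi$ and the right-hand sides are multiplicative, it suffices to treat $m=p^k$. There the claimed formulas read $p^{3k}(1+p^{-1})(1+p^{-2})$ for lines and planes, and $p^{k}(1+p^{-1})$ for lines in a plane.

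\textbf{Lines.} An isotropic line is $\langle v\rangle$ with $v$ primitive. Every vector is isotropic since $J$ is alternating, and primitivity of $v$ is exactly the condition that $\langle v\rangle$ is a free summand with free complement. A line has exactly $|(\ZZ/m\ZZ)^\times|=\phi(m)$ primitive generators. So the number of lines is (number of primitive vectors in $(\ZZ/m\ZZ)^4$)$/\phi(m)$. Primitive vectors in $(\ZZ/p^k)^4$ are those not all divisible by $p$, giving $p^{4k}-p^{4k-4}=p^{4k}(1-p^{-4})$ of them. This gives the first formula.

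\textbf{Lines in a plane.} An isotropic plane $L\cong(\ZZ/m\ZZ)^2$ has every rank-one summand automatically isotropic. So I count primitive vectors of $(\ZZ/p^k)^2$, namely $p^{2k}(1-p^{-2})$, and divide by $\phi(p^k)=p^k(1-p^{-1})$. This yields $p^k(1+p^{-1})$.

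\textbf{Planes.} I will double count pairs (isotropic line $\ell$, isotropic plane $L\supset\ell$). By the previous paragraph, each plane contains $m\prod(1+p^{-1})$ lines. I then need the number of isotropic planes through a fixed isotropic line. By Witt's theorem (as used in Lemma~\ref{lemma:isotropic-basis-lifts}), $\Sp_4(\ZZ/m\ZZ)$ acts transitively on isotropic lines, so I may take $\ell=\langle \ov{e_1}\rangle$. An isotropic plane containing $\ell$ is $\langle \ov{e_1}, w\rangle$ with $w\in \ell^\perp=\langle \ov{e_1},\ov{e_2},\ov{f_2}\rangle$. It is a free summand precisely when the image of $w$ in $\ell^\perp/\ell\cong(\ZZ/m\ZZ)^2$ is primitive. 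Such planes therefore correspond bijectively to lines in $(\ZZ/m\ZZ)^2$, of which there are $m\prod(1+p^{-1})$. The two multiplicities agree, so the double count gives equal numbers of lines and planes.

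The main obstacle is the claim in the plane step. I must check that planes through $\ell$ correspond bijectively to free rank-one summands of $\ell^\perp/\ell$, i.e.\ that the correspondence $L\mapsto L/\ell$ preserves the free-summand-with-free-complement condition over $\ZZ/p^k$. I would handle this by completing $\ov{e_1},w$ to a basis, using that the image $\bar w$ of $w$ is primitive in $\ell^\perp/\ell$ if and only if $\{\ov{e_1}, w\}$ extends to a basis of $(\ZZ/p^k)^4$, which can be checked mod $p$ via Nakayama.
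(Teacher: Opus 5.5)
Your proposal is correct, but it reaches the plane count by a different route from the paper. For isotropic lines, and for lines inside a fixed isotropic plane, your argument is the paper's: count primitive vectors in $(\ZZ/m\ZZ)^4$ (resp.\ $(\ZZ/m\ZZ)^2$) and divide by the $\phi(m)$ primitive generators of a line.

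The paper does not derive the number of isotropic planes. It quotes \cite[Lemma 4.9]{ABBCV} for the value $m^3\prod_{p\mid m}(1+p^{-1})(1+p^{-2})$ and then checks algebraically that this equals the line count. You instead double count incidences $(\ell,L)$. The key observation is that the number of lines in a plane and the number of isotropic planes through a line both equal the number of lines in a free rank-two module, namely $m\prod_{p\mid m}(1+p^{-1})$. For planes through a line this comes from $L\mapsto L/\ell\subset \ell^\perp/\ell$, and your bijection is sound. Every $\langle \ov{e_1},w\rangle$ with $w\in\ell^\perp$ is automatically isotropic. It is a free summand exactly when the $(\ov{e_2},\ov{f_2})$-coordinates of $w$ form a primitive vector.

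Your route makes the lemma self-contained. It uses only the transitivity of $\Sp_4(\ZZ/m\ZZ)$ on isotropic lines, which the paper already invokes via Witt's theorem. It also explains why the two counts coincide rather than noticing it after the fact. The paper's route is shorter but depends on an external count.

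The Chinese remainder reduction is harmless but unnecessary. The primitive-vector counts hold for general $m$, and the free-summand criterion can be checked modulo each prime $p\mid m$ directly.
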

\begin{proof}
    Since every subspace of dimension $1$ is isotropic, the number of isotropic lines equals the number of primitive elements in $(\Z/m\Z)^4$ divided by the number of primitive elements per line. The number of primitive elements is $m^4 \cdot \prod_{p \mid m} (1-p^{-4})$ by a standard count.

    By \cite[Lemma 4.9]{ABBCV}, the number of isotropic planes is
    $$m^{3} \prod\nolimits_{p \mid m}  (1 + p^{-1}) (1 + p^{-2}) \;\; = \;\; \frac{m^4}{\phi(m)} \cdot \prod\nolimits_{p \mid m} (1-p^{-4}).$$
Here, the equality is due to the fact that $\phi(m) = m \prod_{p|m} (1 -p^{-1})$.
    Finally, each isotropic plane contains
    $$\frac{m^2}{\phi(m)} \cdot \prod\nolimits_{p \mid m} (1-p^{-2}) \;\; = \;\; m \cdot \prod\nolimits_{p \mid m} (1 + p^{-1})$$
    lines. Indeed, this is the number of primitive elements in a free $(\Z/m\Z)$-module of rank $2$, which is $m^2 \cdot \prod_{p \mid m} (1-p^{-2})$ by a standard count, divided by the number of primitive elements per line.
\end{proof}

\begin{corollary} \label{cor:counts-simplices-A2m}
    For $m > 2$, the link of $A_2[m]^\trop$ is a simplicial complex of pure dimension $2$, with
    \[\frac{m^4}{2} \cdot \prod\nolimits_{p \mid m} (1 - p^{-4}), \qquad \frac{m^7}{8} \cdot \prod\nolimits_{p \mid m} (1-p^{-2})(1 - p^{-4}), \quad\text{and}\quad \frac{m^7}{12} \cdot \prod\nolimits_{p \mid m} (1-p^{-2})(1 - p^{-4})\]
    simplices of dimension $0$, $1$, and $2$ respectively.
\end{corollary}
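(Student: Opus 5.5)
The plan is to count the simplices of $\Delta_2[m]$ directly, using the isomorphism $LA_2[m]^\trop \cong \Delta_2[m]$ from Theorem~\ref{thm:simplicial-complex-description} and the counts in Lemma~\ref{lem:subspacecount}. Throughout, $m>2$, so $-1\neq 1$ in $\Z/m\Z$. Hence every primitive vector $v$ satisfies $v\neq -v$, and every class $[v]$ has exactly two elements.

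For vertices, the number of primitive elements of $(\Z/m\Z)^4$ is $m^4\prod_{p\mid m}(1-p^{-4})$, as in the proof of Lemma~\ref{lem:subspacecount}. Dividing by $2$ gives the first count.

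For edges, I will count ordered pairs $(v,w)$ that form a $(\Z/m\Z)$-basis of an isotropic plane. This equals the number of isotropic planes times the number of ordered bases of a free rank-$2$ module, namely $|\GL_2(\Z/m\Z)| = m^4\prod_{p\mid m}(1-p^{-1})(1-p^{-2})$. By Lemma~\ref{lem:subspacecount}, the number of isotropic planes is $m^3\prod_{p\mid m}(1+p^{-1})(1+p^{-2})$. The product simplifies, via $(1-p^{-1})(1+p^{-1})(1+p^{-2})=1-p^{-4}$, to $m^7\prod_{p\mid m}(1-p^{-2})(1-p^{-4})$.

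Next I pass from ordered pairs to edges. Each edge $\{[v],[w]\}$ corresponds to $2\cdot 2\cdot 2=8$ ordered basis pairs: two choices of sign for each of $v$ and $w$, and two orders. Here I use that $[v]\neq[w]$, which holds because a basis cannot satisfy $w=\pm v$. Dividing by $8$ gives the stated edge count.

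For $2$-simplices, I use Remark~\ref{rem:edge-contained-in-triangles}: for $m>2$ each edge lies in exactly two $2$-simplices, and each $2$-simplex has three edges. Therefore
\[
\#\{\text{triangles}\} = \frac{2\cdot \#\{\text{edges}\}}{3} = \frac{m^7}{12}\prod_{p\mid m}(1-p^{-2})(1-p^{-4}).
\]
The step needing care is that the two triangles $\{[u],[v],[u\pm v]\}$ through an edge are genuinely distinct, i.e.\ $[u+v]\neq[u-v]$. If $u+v=\pm(u-v)$, then either $2v=0$ or $2u=0$, which is impossible for basis vectors when $m>2$. One also needs each triangle to be a genuine $2$-simplex with three distinct vertices, which follows from Remark~\ref{rem:honest-cc}.

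Finally, for pure dimensionality, every vertex $[v]$ lies in some isotropic plane with a basis containing $v$. Concretely, I extend $v$ to a basis of an isotropic plane, for instance via Witt's theorem as in the proof of Lemma~\ref{lemma:isotropic-basis-lifts}. So every vertex lies on an edge, and every edge lies in a triangle by the remark. The main obstacle is only bookkeeping: getting the factor of $8$ in the edge count right and verifying the distinctness claims.
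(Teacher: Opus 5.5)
Your proposal is correct and follows essentially the same route as the paper: vertices are half the primitive vectors, edges are (number of isotropic planes) $\times\, |\GL_2(\Z/m\Z)|/8$, and triangles come from the double count $2E = 3T$ using Remark~\ref{rem:edge-contained-in-triangles}. Your extra checks fill in details the paper leaves implicit: that each class $[v]$ has two elements, that $[u+v]\neq[u-v]$, and that the complex is pure.
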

\begin{proof}
By Theorem~\ref{thm:simplicial-complex-description}, the number of vertices is half the number of primitive vectors in $(\Z/m\Z)^4$. 
Edges correspond to unordered bases of isotropic planes in $(\Z/m\Z)^4$, up to sign. The number of isotropic planes is given in Lemma~\ref{lem:subspacecount}. The number of unordered bases, up to $\pm 1$, in a free $(\Z/m\Z)$-module of rank $2$ is $\frac{1}{8} \cdot |\GL_2(\Z/m\Z)|$; here, $|\GL_2(\Z/m\Z)| = m^4 \cdot \prod_{p \mid m} (1-p^{-1})(1-p^{-2})$ is a standard count. The number of edges is then obtained as the product of these two quantities. 
Finally, the number of triangles follows because every edge is contained in precisely two triangles, by Remark~\ref{rem:edge-contained-in-triangles}.
\end{proof}

\section{Homotopy type of the link of \texorpdfstring{$A_2[m]^\trop$}{A2m trop}} \label{sec:homeomorphism-type}

In this section, we calculate the homotopy type of $LA_2[m]^\trop$ for $m\ge 3$ and deduce the weight $0$ cohomology $W_0 H^*_c(\cA_2[m];\QQ)$ as a consequence.  These calculations were achieved in Corollary~\ref{cor:A22} in the case $m=2$, where we used the simplicial complex description of $LA_2[2]^\trop$ in Figure~\ref{fig:a22}.

The space $A_2[m]^\trop$ is built out of copies of $\PD_2^{\rt} /\GL_2(\Z)[m]$ glued along common rays, see Remark~\ref{rem:self-arrows} and Equation~\eqref{eq:quotient-PD-cone}.   We next describe the quotient $\PD_2^{\rt} /\GL_2(\Z)[m]$. We shall see that its link is homeomorphic to a closed oriented surface (if $m\ge 3$), which will lead to a computation of the homotopy type of $LA_2[m]^\trop$ as a wedge of surfaces and circles in Theorem~\ref{thm:homotopy}.

\subsection{Quotients of perfect cone decompositions}
\label{sec:PD-quotients}

We describe the quotient $\PD_2^{\rt} / \GL_2(\Z)[m]$ as a cone over a closed orientable surface, whose genus we compute in Lemma~\ref{lem:genus}.

\begin{lemma} \label{lem:glnm} Let $m\ge 3$ and let $t\le n$.
Suppose $v_1,\ldots,v_t$ and $w_1,\ldots,w_t \in \ZZ^n$ are such that \begin{enumerate}
    \item $v_1,\ldots,v_t$, respectively $w_1,\ldots,w_t$, extend to a $\ZZ$-basis
of $\ZZ^n$, and
\item $v_i \equiv w_i\mod m$ for each $i$.
\end{enumerate} 
Then there exists $A\in \GL_n(\ZZ)[m]$ such that $Av_i = w_i$ for $i=1,\ldots,t$.
\end{lemma}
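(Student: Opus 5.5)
The plan is to mirror the proof of Lemma~\ref{lemma:Gamma-m-transitive-on-bases-equal-mod-m}, replacing $\Sp_{2g}$ by $\GL_n$. The idea is to first find any $B\in\GL_n(\ZZ)$ with $Bv_i=w_i$, and then correct it by an element $C$ that fixes each $v_i$ and whose reduction mod $m$ is $\ov{B}^{-1}$. Then $A=BC$ lies in $\GL_n(\ZZ)[m]$ and satisfies $Av_i=w_i$.

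\emph{Step 1: the first matrix $B$.} By hypothesis~(1), extend $v_1,\dots,v_t$ to a $\ZZ$-basis $v_1,\dots,v_n$ and $w_1,\dots,w_t$ to a $\ZZ$-basis $w_1,\dots,w_n$. Let $B\in\GL_n(\ZZ)$ send $v_j\mapsto w_j$ for all $j$. If $t<n$, replace $w_n$ by $-w_n$ if necessary so that $\det B=1$. If $t=n$, then $\ov{B}$ fixes the basis $\ov{v_1},\dots,\ov{v_n}$ of $(\ZZ/m\ZZ)^n$ by hypothesis~(2), so $\ov B=\id$ and we may take $A=B$. We therefore assume $t<n$ and $\det B=1$.

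\emph{Step 2: the correction $C$.} Change coordinates by the matrix with columns $v_1,\dots,v_n$, so that $v_i=e_i$; this conjugation preserves $\GL_n(\ZZ)[m]$. By hypothesis~(2), $\ov B$ fixes $\ov{e_1},\dots,\ov{e_t}$, hence so does $\ov B^{-1}$. It follows that
\[
\ov{B}^{-1}=\begin{pmatrix}\id & x\\ 0 & y\end{pmatrix},
\]
with $x\in\Mat_{t\times(n-t)}(\ZZ/m\ZZ)$ and $y\in\GL_{n-t}(\ZZ/m\ZZ)$. Taking determinants gives $\det y=1$, so $y\in\SL_{n-t}(\ZZ/m\ZZ)$. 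Lift $x$ entrywise to an integer matrix $X$. Lift $y$ to $Y\in\SL_{n-t}(\ZZ)$ using the classical surjectivity of $\SL_k(\ZZ)\to\SL_k(\ZZ/m\ZZ)$, which follows from strong approximation or from generation of $\SL_k(\ZZ/m\ZZ)$ by elementary matrices. Set
\[
C=\begin{pmatrix}\id & X\\ 0 & Y\end{pmatrix}\in\GL_n(\ZZ).
\]
Then $C$ fixes $e_1,\dots,e_t$ and $\ov C=\ov B^{-1}$.

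\emph{Step 3: conclusion.} Put $A=BC$. Then $\ov A=\id$, so $A\in\GL_n(\ZZ)[m]$, and $Ae_i=Be_i=w_i$ for $i\le t$.

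The only substantive step is lifting $y$ to an integer matrix. This is exactly why Step~1 normalizes $\det B=1$: it makes $\det y=1$, so the $\SL$ surjectivity applies. The hypothesis $m\ge 3$ does not appear to be used in this argument. I expect it is needed only for later applications, or it could be used to handle the determinant issue differently: for $m\ge 3$ we have $-1\not\equiv 1 \pmod m$, so determinants mod $m$ detect the sign.
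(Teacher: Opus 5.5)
Your proof is correct and takes essentially the same route as the paper: normalize the matrix sending one extended basis to the other to have determinant $1$, then multiply it by an integral lift of the inverse of its reduction. That lift lies in the stabilizer of $\ov{e_1},\dots,\ov{e_t}$ and comes from the surjectivity of $\SL_{n-t}(\ZZ)\to\SL_{n-t}(\ZZ/m\ZZ)$ on the lower-right block. Your remark about $m\ge 3$ is also right: the paper uses it only to force the determinant to be $1$ when $t=n$, and your direct observation that $\ov B=\id$ in that case shows this hypothesis is unnecessary.
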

\begin{proof}
By changing the $\ZZ$-basis, we shall assume for convenience that $v_i = e_i$ for each $i$. 
 
Let $R \subset \SL_n(\ZZ)$ be the subgroup of matrices that fix each $e_1,\ldots,e_t$, and similarly define $\ov R \subset \SL_n(\ZZ/m\ZZ)$.  Then the known surjectivity of $\SL_{n-t}(\ZZ) \to \SL_{n-t}(\ZZ/m\ZZ)$, applied to the lower right $(n-t)\times (n-t)$ blocks of $R$ and $\ov R$, shows that the reduction mod $m$ map $R\to \ov R$ is surjective.  

Let $C$ be a matrix whose columns form a $\ZZ$-basis of $\ZZ^n$ extending $w_1,\ldots,w_t$.  We have $\det C = \pm 1$, and $\det C = 1$ if $t=n$ since $\det C$ reduces to $1$ mod $m$.  If $t<n$, after changing the sign of the last column, we may again assume $\det C = 1$.  Then $\ov C \in \ov R$, hence also $\ov {C}{}^{-1}\in \ov R.$  Now use the surjectivity of $R\to \ov R$ to pick $B\in R$ such that $\ov{B} = \ov {C}{}^{-1}$.  Then we claim $A=CB$ has the desired properties. Indeed, for $i=1,\ldots,t$, we have $Ae_i = CBe_i = Ce_i = w_i$.  We also have
\[\ov A = \ov{C}\cdot \ov{B} = \ov{C} \cdot \ov{C}{}^{-1} = 1,\]
showing that $A\in \GL_n(\ZZ)[m]$ as desired.
\end{proof}

\begin{lemma} \label{omegagcount}
     Let $m > 2$. Under the action by $\GL_2(\Z)[m]$, the perfect cone decomposition $\Sigma(\ZZ^2)$ of $\PDrt_2$ has
          \[c_{1,m} := \frac{m^2}{2} \cdot \prod\nolimits_{p \mid m} (1 - p^{-2}), \qquad c_{2,m} := \frac{m^3}{4} \cdot \prod\nolimits_{p \mid m} (1 - p^{-2}), \quad\text{and}\quad c_{3,m} := \frac{m^3}{6} \cdot \prod\nolimits_{p \mid m} (1 - p^{-2})\]
          orbits of cones of dimension $1$, $2$, and $3$, respectively.
\end{lemma}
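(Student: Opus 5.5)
The plan is to classify orbits of cones by their labels mod $m$, using Lemma~\ref{lem:glnm}, and then count labels. I will do the three dimensions in turn, using Proposition~\ref{prop:perf-2} to identify cones with sets of primitive vectors up to sign.

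\emph{Rays.} A ray is $\R_{\ge0}\langle vv^t\rangle$ with $v\in\Z^2$ primitive, determined by $[v]$. The action of $A\in\GL_2(\Z)[m]$ sends $vv^t$ to $(Av)(Av)^t$. So two rays $[v]$ and $[w]$ lie in the same orbit iff $Av=\pm w$ for some $A\in\GL_2(\Z)[m]$. A necessary condition is $\ov v=\pm\ov w$. Conversely, if $\ov v=\ov w$ (after replacing $w$ by $-w$), then Lemma~\ref{lem:glnm} with $t=1$ gives $A$ with $Av=w$. Thus orbits of rays biject with primitive elements of $(\Z/m\Z)^2$ modulo $\pm1$. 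Every primitive vector mod $m$ lifts to a primitive vector of $\Z^2$, by surjectivity of $\SL_2(\Z)\to\SL_2(\Z/m\Z)$ applied to a completion to a basis mod $m$. Since $m>2$, we have $v\ne -v$ mod $m$ for primitive $v$. This gives $c_{1,m}=\tfrac12 m^2\prod_{p\mid m}(1-p^{-2})$.

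\emph{Edges.} Here I use the fact that $\GL_2(\Z)[m]$ acts freely on oriented data. A $2$-cone is an unordered pair $\{[v],[w]\}$ with $v,w$ a $\Z$-basis. Same-orbit pairs have the same unordered pair of $\pm$-labels. Conversely, suppose the labels agree. Choose signs and an ordering so that $\ov{v'}=\ov v$ and $\ov{w'}=\ov w$. Then Lemma~\ref{lem:glnm} with $t=n=2$ applies; it already handles the determinant, since the determinants agree mod $m$ and $m>2$ forces them to be equal. So orbits biject with the realizable label pairs. These are unordered pairs, up to independent signs, of mod-$m$ vectors forming a basis of $(\Z/m\Z)^2$ whose determinant is $\pm1$; the determinant condition comes from reduction of a $\Z$-basis. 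Lifting works by surjectivity of $\SL_2(\Z)\to\SL_2(\Z/m\Z)$. The count of ordered such bases is $2|\SL_2(\Z/m\Z)|=2m^3\prod_{p\mid m}(1-p^{-2})$. Dividing by $4$ for signs and by $2$ for ordering gives $c_{2,m}=\tfrac{m^3}{4}\prod_{p\mid m}(1-p^{-2})$. The division is justified because these symmetries act freely when $m>2$: $v\ne\pm w$ and $v\ne -v$ mod $m$.

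\emph{Triangles.} The orbit of a triangle is determined by its edges, so I count via incidence. By Proposition~\ref{prop:perf-2}, each edge $\{[v],[w]\}$ lies in exactly the two triangles with third vertex $[v+w]$ or $[v-w]$, and each triangle has three edges. This double counting should descend to orbits once I check that no element of $\GL_2(\Z)[m]$ maps a cone to itself nontrivially; this follows because labels of distinct rays of a cone are distinct when $m>2$ (Remark~\ref{rem:honest-cc}). It must also be checked that the two triangles on an edge are not identified; they are distinct because their third labels $[\ov v+\ov w]\ne[\ov v-\ov w]$ differ when $m>2$. Then $3c_{3,m}=2c_{2,m}$, giving $c_{3,m}=\tfrac{m^3}{6}\prod_{p\mid m}(1-p^{-2})$.

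The main obstacle I expect is this last bookkeeping step: verifying that the edge–triangle incidence relation is exactly $2$-to-$3$ on orbits and not merely upstairs. This needs the freeness and label-distinctness arguments above. A secondary subtlety is the sign/determinant issue in the edge case, namely that realizable bases have determinant $\pm1$ rather than arbitrary units.
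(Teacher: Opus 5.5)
Your proposal is correct and follows the paper's proof essentially step for step: bijections of ray and edge orbits with mod-$m$ label data via Lemma~\ref{lem:glnm}, then the incidence double count $3c_{3,m}=2c_{2,m}$, using that the two triangles on an edge have distinct third labels and that the rays of a cone have pairwise distinct labels. The differences are cosmetic: you lift primitive vectors and label pairs using surjectivity of $\SL_2(\Z)\to\SL_2(\Z/m\Z)$ rather than the paper's explicit gcd construction, and you count edges as $2|\SL_2(\Z/m\Z)|/8$ rather than as $m\cdot c_{1,m}/2$.
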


\begin{proof}
    The explicit description of $\Sigma(\ZZ^2)$ in Proposition~\ref{prop:perf-2} implies that the orbits of rays are in bijection with the set \[\{[v]  :  v \in \ZZ^2 \textrm{ primitive}\}/\GL_2(\ZZ)[m],\]
    where again $[v]$ denotes the equivalence class of $v$ up to sign. Now we claim that the reduction mod $m$ map
    \begin{equation}\label{eq:the-glnm-question}\{[v]  :  v \in \ZZ^2 \textrm{ primitive}\}/\GL_2(\ZZ)[m] \longrightarrow \{[v]: v\in  (\ZZ/m\ZZ)^2 \textrm{ primitive}\}\end{equation}
    is a bijection.  It is an injection by the $t=1$ case of Lemma~\ref{lem:glnm}, and a surjection as follows. If $v=(x,y)\in (\ZZ/m\ZZ)^2$ is primitive, then $\gcd(x,y,m) = 1$.  If $x = 0$, then $(m, y) \in \Z^2$ is a primitive lift of $(x,y)$. Otherwise, pick $t\in \ZZ$ 
    to be the product of all the primes $p$ with $p|x$ and $p\nmid y$.
Then $\gcd(x,y+tm) = 1$ and $(x,y+tm) \in \ZZ^2$ is a primitive lift of $(x,y)$.  This shows~\eqref{eq:the-glnm-question} is a bijection.  Finally, the number of primitive vectors in $(\Z/m\Z)^2$ is $m^2 \cdot \prod_{p \mid m} (1 - p^{-2})$ by a standard count, so the computation of $c_{1,m}$ follows.

Next, Proposition~\ref{prop:perf-2} implies that the $\GL_2(\ZZ)[m]$-orbits of $2$-dimensional cones are in bijection with
\[\{\{[v],[w]\} \col v,w\in \ZZ^2, \, \det (v,w) = \pm 1\}/\GL_2(\ZZ)[m].\]
Again, the reduction mod $m$ map from this set to the set
\[\{\{[v],[w]\} \col v,w\in (\ZZ/m\ZZ)^2, \, \det (v,w) = \pm 1\}\]
is injective by Lemma~\ref{lem:glnm}. It is surjective by the surjectivity of
$\SL^{\pm}_n(\ZZ)\to \SL^{\pm}_n(\ZZ/m\ZZ),$
where $\SL^{\pm}$ denotes matrices of determinant $\pm 1$.  So we have a bijection. To compute the size of the latter set, for every primitive $v\in (\ZZ/m\ZZ)^2/\{\pm1\}$, there are $m$ possible choices of a pair $w\in (\ZZ/m\ZZ)^2/\{\pm1\}$. The computation of $c_{2,m}$ follows.

Finally, we claim that $c_{3,m} = \tfrac23 \cdot c_{2,m}$.  Let $\Sigma(2)$ and $\Sigma(3)$ denote the set of $2$- and $3$-dimensional cones of $\Sigma(\ZZ^2)$, respectively, so that $\Sigma(2)/\GL_2(\ZZ)[m]$ and $\Sigma(3)/\GL_2(\ZZ)[m]$ denote the sets of orbits.  Given $\tau\in \Sigma(2)$, we claim that the two 3-dimensional cones containing $\tau$ as a face are in different $\GL_2(\ZZ)[m]$-orbits.  Indeed, if $\tau$ has rays with ray generators $vv^t$ and $ww^t$ for $v,w\in \ZZ^2$, then the two cones in question have ray generators 
\begin{equation}\label{eq:full-dimensional-perfect-cones}vv^t, ww^t, (v+w)(v+w)^t \quad \text{ and } \quad vv^t, ww^t, (v-w)(v-w)^t.\end{equation}
Furthermore, we have $v+w \not\equiv v-w \mod m$, since $m>2$, so no element of $\GL_2(\ZZ)[m]$ can take one of these 3-dimensional cones to the other.
Similarly, given $\sigma \in \Sigma(3)$, we claim that the three 2-dimensional faces of $\sigma$ are in distinct $\GL_2(\ZZ)[m]$-orbits.  This follows from the fact that for any $\ZZ$-basis $v,w$ of $\ZZ^2$, the three vectors $v,w,v+w$ have distinct reductions mod $m$.

Therefore the incidence correspondence for $\GL_2(\ZZ)[m]$-orbits
\[\{([\tau],[\sigma])\col \tau \in \Sigma(2), \sigma \in \Sigma(3),\tau\prec\sigma \} 
\]
maps 2-to-1 to $\Sigma(2)/\GL_2(\ZZ)[m]$ and 3-to-1 to $\Sigma(3)/\GL_2(\ZZ)[m]$, proving that $c_{3,m} = \tfrac23 \cdot c_{2,m}$.
\end{proof}

\begin{remark}\label{rem:again-simplicial-complex} Analogously to Remark~\ref{rem:honest-cc}, the $\GL_2(\Z)[m]$-action does not induce any non-trivial automorphisms of any cones $\sigma \in \Sigma(\ZZ^2)$ because for all $\ZZ$-bases $v,w$ of $\ZZ^2$, the three vectors $v,$ $w$, and $v+w$ have distinct reductions mod $m$ for $m\ge 2$.  Therefore, for $m\ge 2$, the link of the quotient $\PDrt_2/\GL_2(\Z)[m]$ admits the structure of a simplicial complex.
\end{remark}

\begin{example} \label{ex:A_23trop-and-A_25trop} (The cases $m=3$ and $5$) 
The $\GL_2(\Z)[3]$-action on the perfect cone decomposition of $\PDrt_2$ is depicted in Figure~\ref{fig:mod3} (right). The vertices are labeled by the mod $3$ reductions of the corresponding minimal vectors. After quotienting out by the group action, the link of $\PDrt_2/\GL_2(\Z)[3]$ is a tetrahedron.  Similarly, the link of $\PDrt_2/\GL_2(\Z)[5]$ is an icosahedron, depicted in Figure~\ref{fig:A_2[5]}. The vertices of the icosahedron correspond to $\GL_2(\Z)[5]$-orbits of rays and are labeled by the corresponding elements in $(\Z/5\Z)^2/\{\pm 1\}$.

    \begin{figure}[h]
        \centering
        \includegraphics[width=0.4\linewidth]{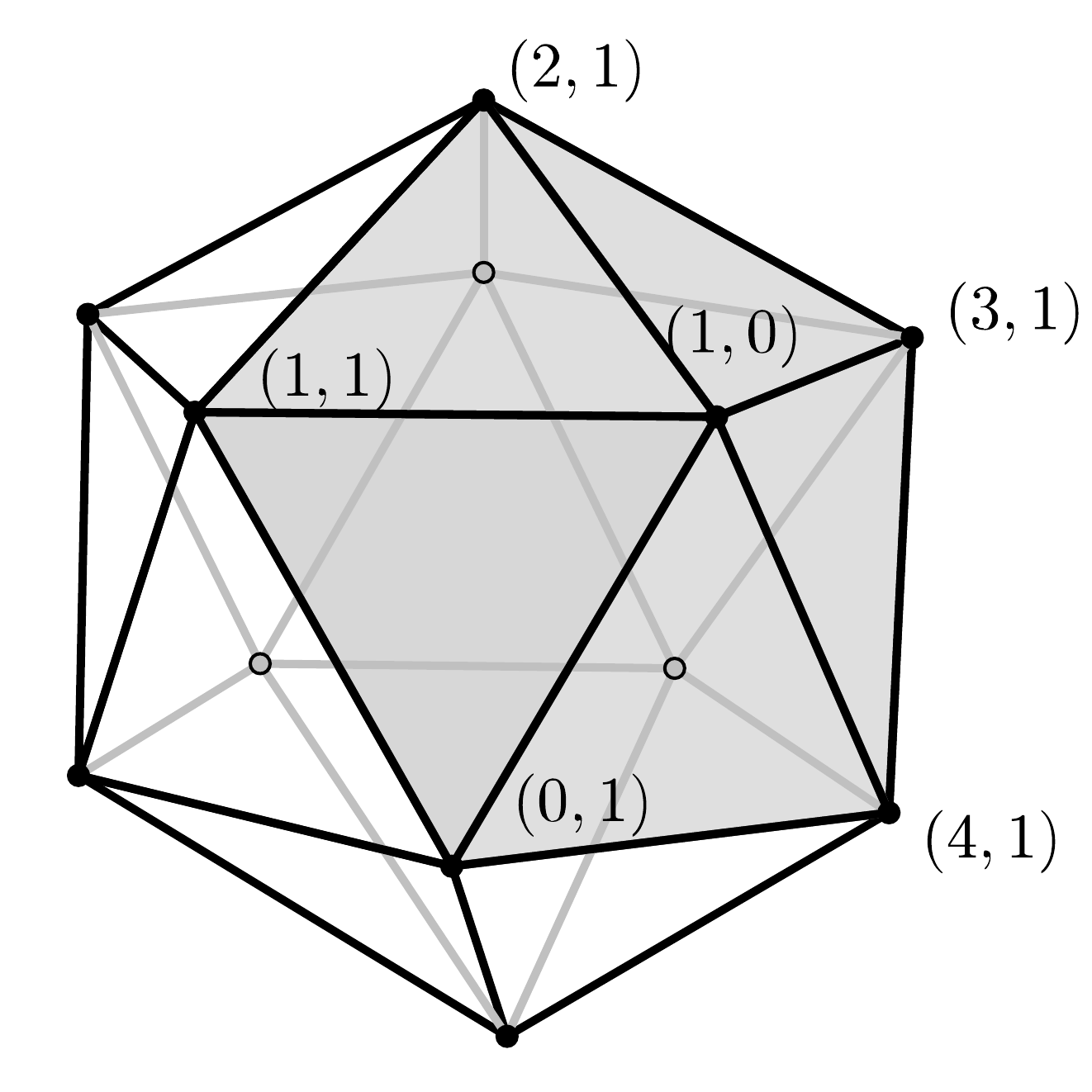}
        \caption{The link of $\PDrt_2/\GL_2(\Z)[5],$ with the vertex labeled $(1,0)$ and its neighbors.} 
        \label{fig:A_2[5]}
    \end{figure}
\end{example}

\begin{lemma}  \label{lem:genus}
    For $m \geq3$, the link of $\PDrt_2/\GL_2(\Z)[m]$ is homeomorphic to a closed orientable surface of genus \[h_m := 1-\frac{1}{2}(c_{1,m} - c_{2,m} + c_{3,m}).\]
\end{lemma}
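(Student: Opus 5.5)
By Remark~\ref{rem:again-simplicial-complex}, the link $X$ of $\PDrt_2/\GL_2(\Z)[m]$ is a finite simplicial complex. Lemma~\ref{omegagcount} shows that it has $c_{1,m}$ vertices, $c_{2,m}$ edges and $c_{3,m}$ triangles. The plan is to prove that $X$ is a closed, connected, orientable $2$-manifold. Once this is done, the classification of surfaces together with $\chi(X)=c_{1,m}-c_{2,m}+c_{3,m}=2-2h_m$ gives the stated genus.

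\textbf{Pure dimension and edges.} Every cone of $\Sigma(\ZZ^2)$ is a face of a $3$-dimensional perfect cone, by Proposition~\ref{prop:perf-2}, so $X$ is pure of dimension $2$. The proof of Lemma~\ref{omegagcount} shows that the two triangles containing a given edge in $\Sigma(\ZZ^2)$ lie in distinct orbits. Since no cone is glued to itself (Remark~\ref{rem:again-simplicial-complex}), every edge of $X$ lies in exactly two triangles of $X$.

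\textbf{Vertex links.} Fix a primitive $v\in\ZZ^2$ and extend it to a basis $v,w$. In the link of $\Sigma(\ZZ^2)$, the neighbours of $[v]$ are the vectors $[w+kv]$ for $k\in\ZZ$, and they form an infinite path in which $[w+kv]$ is adjacent to $[w+(k+1)v]$ through the triangle $\{v,\,w+kv,\,w+(k+1)v\}$. The stabilizer of $[v]$ in $\GL_2(\ZZ)[m]$ contains the unipotent element $u$ with $v\mapsto v$ and $w\mapsto w+mv$. It can also contain $-1$ only if $m\le 2$, so for $m\ge3$ the stabilizer of the ray is generated by $u$. The image of the star in the quotient is therefore the path modulo the shift $k\mapsto k+m$. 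This is an $m$-cycle, because the $m$ vectors $w+kv$ with $0\le k<m$ have distinct classes modulo $m$ up to sign, using $m\ge 3$ and the argument of Lemma~\ref{lem:glnm}. Hence every vertex link is a circle of length $m\ge3$, and $X$ is a closed surface. This matches the icosahedron when $m=5$.

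\textbf{Connectedness and orientability.} Connectedness follows because the link of $\Sigma(\ZZ^2)$ is connected and surjects onto $X$. For orientability, I would orient the triangle $\{v,w,v+w\}$ using the sign of $\det(v,w)$ with respect to the cyclic order $(v,w,v+w)$. This orientation is invariant under $\GL_2(\ZZ)[m]\subset \SL_2(\ZZ)$, since elements of this group have determinant congruent to $1$ modulo $m\ge 3$. The orientation of the upper half plane, equivalently of the link of $\PDrt_2$, which is a disk, therefore descends to a coherent orientation on $X$.

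\textbf{Main obstacle.} The hardest step is the vertex-link computation: one must check carefully that the stabilizer of a ray acts on its star exactly by the shift $k\mapsto k+m$, so that the quotient is a genuine $m$-cycle rather than a path with extra identifications. The key inputs are Lemma~\ref{lem:glnm} and the fact that no element of $\GL_2(\ZZ)[m]$ sends $v$ to $-v$ when $m\ge3$.
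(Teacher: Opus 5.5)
Your proposal is correct and follows essentially the same route as the paper. Both arguments check that each edge lies in exactly two triangles, that each vertex link is the $m$-cycle on $[w],[w+v],\dots,[w+(m-1)v]$, that the surface is orientable because $\GL_2(\ZZ)[m]\subset\SL_2(\ZZ)$ for $m\ge 3$, and then read off the genus from the Euler characteristic via Lemma~\ref{omegagcount}. The differences are minor: you orient triangles combinatorially by the sign of $\det(v,w)$, whereas the paper uses the orientation of $\Sym^2(\R^2)$ preserved by the action away from the vertices and then extends over the finitely many vertices; and you spell out the ray-stabilizer computation and the connectedness, which the paper leaves implicit.
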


\begin{remark}\label{rem:modular-curve}
    The link of $\PDrt_2$ is homeomorphic to $\HH\cup \PP^1(\QQ)$ and $\GL_2(\Z)[m] = \SL_2(\Z)[m]$ when $m \geq 3$, so the link of $\PDrt_2/\GL_2(\Z)[m]$ is homeomorphic to the compactified modular curve $X(m)$. 
    Kolay \cite[Proof of Theorem 9.1]{kolay} determines the quotient of the \emph{Farey complex} on $\mathbb{H}$ by $\SL_2(\Z)[m]$, where the Farey complex is defined as follows. Given two rational numbers $a/b$ and $c/d$ (in irreducible form) in $\mathbb{Q} \subset \mathbb{R} \subset \mathbb{C}$, the Farey complex has a semicircle in the upper half plane connecting $a/b$ and $c/d$ if 
    $
    \det \left(
        (a , b),
        (c , d)
    \right ) = \pm 1.
    $
    By the proof of Lemma \ref{omegagcount}, the quotient of the Farey complex on $\mathbb{H}$ by $\SL_2(\Z)[m]$ has the same combinatorial structure as the link of $\PDrt_2/\GL_2(\Z)[m]$.
    For completeness, we include the proof of Lemma \ref{lem:genus} below, but it can also be deduced from loc.\ cit.
\end{remark}

\begin{proof}[Proof of Lemma~\ref{lem:genus}]
The link $L$ of $\PDrt_2/\GL_2(\Z)[m]$ is a $2$-dimensional simplicial complex, as in Remark~\ref{rem:again-simplicial-complex}.  
We verify now that it is a triangulation of an oriented surface.  First, each edge is contained in precisely two triangles; see~\eqref{eq:full-dimensional-perfect-cones}. 
Next, we study the neighborhood of every vertex. Near each ray whose minimal vector reduces to $[v_1] \in (\Z/m\Z)^2/\{\pm1\}$, the triangulated link $L$ looks like an $m$-gon with $[v_1]$ in the center, the $m$ vertices corresponding to 
    \begin{equation*}
        [v_2], \; [v_2 + v_1], \; [v_2 + 2v_1], \ldots , \; [v_2 + (m-1)v_1] \;\; \in  \;\; (\Z/m\Z)^2/\{\pm1\},
    \end{equation*}
    and edges connecting the center to each of the vertices. Here, $v_2$ denotes any vector in $(\Z/m\Z)^2$ satisfying $\det(v_1,v_2) = \pm 1$. An example of such a $5$-gon in the case $m=5$ is shaded in Figure~\ref{fig:A_2[5]}.

    We now check orientability. Away from its finitely many vertices, corresponding to orbits of rays in $\Sigma(\ZZ^2)$, the link inherits an orientation from $\R^3/\R_{>0} \supset L\PD_2$ because the action of $\GL_2(\Z)[m]$ on $\R^3$ is orientation-preserving. This is by the argument in \cite[Lemma 7.1]{elbaz-vincent-gangl-soule-perfect}, since all elements of $\GL_2(\Z)[m] \subset \SL_2(\ZZ)$ have positive determinant when $m\ge 3$. If a smooth surface minus finitely many points is orientable, so is the original surface, so the entire link is orientable.
     Finally, the genus of the surface can be calculated by computing the Euler characteristic using the numbers of vertices, edges, and faces in Lemma~\ref{omegagcount}.
\end{proof}

\subsection{Weight zero cohomology of \texorpdfstring{$\cA_2[m]$}{A2m}}

Using Lemma~\ref{lem:genus}, we describe the homeomorphism type of $LA_2[m]^\trop$ as a collection of orientable surfaces glued along points. This yields a simple description of the homotopy type of $LA_2[m]^\trop$ in Theorem~\ref{thm:homotopy} and the cohomology of $W_0H^*_c(\cA_2[m];\QQ)$ in Theorem~\ref{thm:a2m}.

Recall that $LA_2[m]^\trop$ is isomorphic to the simplicial complex $\Delta_2[m]$ of Theorem~\ref{thm:simplicial-complex-description}.  We will cover $\Delta_2[m]$ with subcomplexes that are isomorphic to the link of $\PD_2^\rat/\GL_2(\ZZ)[m]$ and hence are triangulations of an orientable surface of genus $h_m$. Furthermore, we shall see that these subcomplexes overlap only on vertices.

Let $P \subset (\ZZ/m\ZZ)^4$ be an isotropic subspace of rank 2, and let $[\omega]$ be a generator for the rank 1 free module $\bigwedge^2 P$, up to sign.  Let $S(P,[\omega])$ be the subcomplex of $\Delta_2[m]$ on vertex set
\[\{[v] \in (\ZZ/m\ZZ)^4/\{\pm1\}: v \in P \textrm{ primitive}\}\]
in which two vertices $[v]$ and $[w]$ span an edge of $S(P,[\omega])$ if $v\wedge w = \pm \omega$, and three vertices span a triangle if each pair spans an edge.

\begin{remark}
The way that $S(P,[\omega])$ naturally arises is that if $\widetilde P$ is any lift of $(P,[\omega])$ in the sense of~\eqref{eq:explicit-bijection}, then the canonical map $L\Sigma(\widetilde P)/\GL(\widetilde P)[m] \to \Delta_2[m]$ is an injection with image $S(P,[\omega])$.
\end{remark}

\begin{corollary}\label{cor:link-of-A2tropm} Let $m\ge 3$.  
\begin{enumerate}
    \item Each $S(P,[\omega])$ is isomorphic, as a simplicial complex, to the link of $\PD^\rat_2/\GL_2(\ZZ)[m]$, and hence to a triangulation of a closed oriented surface of genus $h_m$.
    \item Every $1$- and $2$-dimensional simplex of $\Delta_2[m]$ lies in a unique $S(P,[\omega])$.  
\end{enumerate}
In particular, $\Delta_2[m]$, and hence also $LA_2[m]^\trop$, is obtained from 
\[\pi_m \; := \; \frac{m^4}{2} \cdot \prod\nolimits_{p \mid m} (1 - p^{-4})\]surfaces of genus $h_m$, each with $c_{1,m}$ distinct marked points, by gluing these $\pi_m \cdot c_{1,m}$ points into $\pi_m$ equivalence classes. 
\end{corollary}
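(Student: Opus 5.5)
For part (1) I will construct an explicit isomorphism between $S(P,[\omega])$ and the link of $\PD^\rat_2/\GL_2(\ZZ)[m]$, whose simplicial structure is given by the proof of Lemma~\ref{omegagcount} and Remark~\ref{rem:again-simplicial-complex}. Concretely, that link has:
\begin{itemize}
\item vertices the classes $[x]$ with $x\in(\ZZ/m\ZZ)^2$ primitive;
\item edges the pairs $\{[x],[y]\}$ with $\det(x,y)=\pm1$;
\item triangles the triples $\{[x],[y],[x\pm y]\}$.
\end{itemize}
Choose a $\ZZ/m\ZZ$-basis $a,b$ of $P$ with $a\wedge b=\omega$; this is possible because $P$ is free of rank $2$ and we may rescale $b$ by a unit. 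Then $x=(x_1,x_2)\mapsto x_1a+x_2b$ is a bijection from primitive vectors of $(\ZZ/m\ZZ)^2$ to primitive vectors of $P$. It commutes with sign, and it carries $\det(x,y)$ to the coefficient of $\omega$ in $v\wedge w$. So it matches vertices and edges.

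For triangles, note that in $S(P,[\omega])$ three vertices span a triangle exactly when they are pairwise adjacent. I will check that the flag (clique) structure of the link of $\PD^\rat_2/\GL_2(\ZZ)[m]$ agrees with its triangle structure. Suppose $[x],[y],[z]$ are pairwise adjacent. Write $z=\alpha x+\beta y$. Then $\det(x,z)=\beta$ and $\det(y,z)=-\alpha$, so $\alpha,\beta\in\{\pm1\}$ and $z=\pm x\pm y$.

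I also need $S(P,[\omega])$ to sit correctly inside $\Delta_2[m]$, i.e.\ its edges and triangles must be those of $\Delta_2[m]$ restricted appropriately.
\begin{itemize}
\item An edge $\{[v],[w]\}$ with $v\wedge w=\pm\omega$ means $v,w$ is a basis of the isotropic plane $P$, so it is an edge of $\Delta_2[m]$.
\item Triangles of $S(P,[\omega])$ are triples $\pm u\pm v\pm w=0$, which are triangles of $\Delta_2[m]$.
\end{itemize}
The surface statement then follows from Lemma~\ref{lem:genus}.

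For part (2), take an edge $\{[v],[w]\}$ of $\Delta_2[m]$. Then $v,w$ is a basis of an isotropic plane, and that plane is forced to be $P=\langle v,w\rangle$. The class $[\omega]=[v\wedge w]$ is also determined, since it is independent of the signs of $v$ and $w$. Hence the edge lies in exactly one $S(P,[\omega])$. For a triangle $\{[u],[v],[u+v]\}$ of $\Delta_2[m]$, the three vectors all lie in $P=\langle u,v\rangle$. The three wedges are $u\wedge v$, $u\wedge(u+v)=u\wedge v$ and $v\wedge(u+v)=-u\wedge v$, all equal to $\pm\omega$. So the triangle lies in $S(P,[u\wedge v])$, and uniqueness follows from the edge case.

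For the final count, the number of pairs $(P,[\omega])$ is the number of isotropic planes times $p_m$. By Lemma~\ref{lem:subspacecount} this is
\[
\frac{m^4}{\phi(m)}\prod_{p\mid m}(1-p^{-4})\cdot\frac{\phi(m)}{2}=\pi_m .
\]
Each $S(P,[\omega])$ has $c_{1,m}$ vertices by Lemma~\ref{omegagcount}. By (2), $\Delta_2[m]$ is the union of these subcomplexes, and they meet only in vertices: any shared edge or triangle would contradict uniqueness. Therefore $\Delta_2[m]$ is obtained from the disjoint union of the surfaces by identifying marked points that carry the same label $[v]$.

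The number of resulting classes equals the number of vertices of $\Delta_2[m]$, which is $\pi_m$ by Corollary~\ref{cor:counts-simplices-A2m}. Every vertex does appear, because every primitive $v$ extends to a basis $v,w$ of some isotropic plane. This uses Witt's theorem over $\ZZ/m\ZZ$, or equivalently transitivity of $\Sp_4(\ZZ/m\ZZ)$ on primitive vectors.

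The main point needing care is the vertex-matching step in (1): that the coordinate map $x\mapsto x_1a+x_2b$ is a bijection on primitive vectors modulo sign for composite $m$, and that the clique-equals-triangle argument is valid over $\ZZ/m\ZZ$. Both reduce to the determinant computation above together with the freeness of $P$.
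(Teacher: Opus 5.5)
Your proposal is correct and follows essentially the same route as the paper. You identify $S(P,[\omega])$ with the link of $\PD^\rat_2/\GL_2(\ZZ)[m]$ via a basis of $P$ adapted to $\omega$, get uniqueness in (2) because an edge determines both $P=\langle v,w\rangle$ and $[v\wedge w]$, and count $\pi_m$ as $p_m$ times the number of isotropic planes. Beyond the paper, you also spell out that triangles are exactly the pairwise-adjacent triples and that every vertex of $\Delta_2[m]$ lies on some surface, which the paper leaves implicit.
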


\begin{proof}
The proof of Lemma~\ref{omegagcount} characterizes $L\PD^\rat_2/\GL_2(\ZZ)[m]$ as isomorphic to the  simplicial complex on the vertex set 
\[\{[v]: v\in (\ZZ/m\ZZ)^2 \textrm{ primitive}\}\]
in which two vertices $[v], [w]$ span an edge if $\det(v,w) = \pm 1$ and three vertices span a triangle if each pair spans an edge.  Any choice of isomorphism $P \cong (\ZZ/m\ZZ)^2$ taking $\omega$ to $\pm 1$ then induces an isomorphism with $S(P,[\omega])$. Now Lemma~\ref{lem:genus} concludes the first statement.  The second statement is immediate: if $[v]$ and $[w]$ span an edge in $\Delta_2[m]$, then they determine both a plane $P = \langle v,w\rangle$ and the generator $v\wedge w\in \bigwedge^2P$.

By Lemma~\ref{lem:subspacecount}, $\pi_m$ equals $p_m = \frac{1}{2}\phi(m)$ times the number of isotropic planes in $(\Z/m\Z)^4$.  Since a free $\ZZ/m\ZZ$-module of rank $1$, for example $\bigwedge^2 P$, has $p_m$ generators up to sign, the count of the surfaces follows.

On a single triangulated surface $S(P,[\omega])$, the number of vertices is $c_{1,m}$, using part (1) and Lemma~\ref{omegagcount}.  Then  after gluing, the equivalence classes of distinguished points correspond to the vertices of $\Delta_2[m]$ and hence to primitive vectors in $(\Z/m\Z)^4$, modulo $\pm 1$, also counted by $\pi_m$.
\end{proof}

\begin{example}
    Continuing Example~\ref{ex:A_23trop-and-A_25trop}, the link of $A_2[3]^\trop$ is given by the boundary of $40$ tetrahedra glued along vertices. The link of $A_2[5]^\trop$ is homeomorphic to the boundary of $312$ icosahedra glued at vertices.
\end{example}

We may explicitly encode the gluing pattern of the surfaces from Corollary~\ref{cor:link-of-A2tropm} in a finite bipartite graph $G_m$ as follows. For $m\geq3$, define
\begin{align*}
\mathcal{V}_{m} &\coloneqq \left\{[v]  \mid \text{$v\in (\ZZ/m\ZZ)^4$ primitive}\right\} \\
\cP_{m} &\coloneqq \left\{(P,[\omega]) \;\bigg|\; \begin{matrix}
    \text{$P\subset (\ZZ/m\ZZ)^4$ an isotropic plane} \\
    \text{$\omega \in \bigwedge^{2} P$ a $(\ZZ/m\ZZ)$-module generator}
\end{matrix}\right\}
\end{align*}
where $[-]$ denotes the equivalence class up to multiplication by $\pm1$. Let $G_{m}$ denote the bipartite graph with vertex classes $\mathcal{V}_{m}$ and $\cP_m$ and one edge joining the vertex $[v]$ to the vertex $(P,[\omega])$ if and only if $v\in P$. 

Let $\Sigma_{h}$ denote the closed orientable surface of genus $h$.

\begin{theorem} \label{thm:homotopy}
    For all $m\geq3$, there is a homotopy equivalence:
    \begin{equation} \label{eq:homotopy-equivalence}
        LA_{2}[m]^{\trop} \;\; \simeq  \;\; \left(\bigvee_{i=1}^{\pi_m} \Sigma_{h_m} \right) \vee \left( \bigvee_{j=1}^{\beta_{1}(G_m)} S^1\right),
    \end{equation}
    where $\beta_1(G_m)=\pi_m(c_{1,m}-2) + 1$ is the first Betti number of the graph $G_m$.
\end{theorem}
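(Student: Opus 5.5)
By Corollary~\ref{cor:link-of-A2tropm}, $LA_2[m]^\trop\cong\Delta_2[m]$ is the union of the $\pi_m$ subcomplexes $S(P,[\omega])$, one for each $(P,[\omega])\in\cP_m$. Each is a closed orientable surface $\Sigma_{h_m}$, and any two of them meet only in vertices. The plan is to convert this into a homotopy-theoretic statement about a space built from the surfaces and the bipartite graph $G_m$.

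First, form the space $X$ from the disjoint union of the surfaces $S(P,[\omega])$ together with the graph $G_m$. For each $(P,[\omega])$ and each primitive $[v]$ with $v\in P$, we attach the edge of $G_m$ joining $[v]$ to $(P,[\omega])$ as follows. One endpoint is glued to the vertex $(P,[\omega])$ and the other to the copy of $[v]$ inside the surface $S(P,[\omega])$. Concretely, $X$ is the homotopy colimit (double mapping cylinder) of the diagram
\[\textstyle\bigsqcup_{S}\Sigma_{h_m}\;\longleftarrow\;\bigsqcup_{(P,[\omega])}\{\text{marked points}\}\;\longrightarrow\;\mathcal{V}_m.\]
Collapsing, for every $[v]\in\mathcal{V}_m$, the star of $[v]$ in $G_m$ identifies all copies of $[v]$ and produces exactly $\Delta_2[m]$. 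Here the \emph{star} of $[v]$ is the tree consisting of $[v]$ and its incident half-edges. Since the union of these stars is a disjoint union of contractible subcomplexes and $(X,\text{stars})$ is a CW pair, the quotient map is a homotopy equivalence. So $LA_2[m]^\trop\simeq X$.

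Next, in $X$ each surface $S(P,[\omega])$ is attached to the graph $G_m$ at its $c_{1,m}$ marked points, via edges ending at the vertex $(P,[\omega])$. Rather than gluing each surface along many points, I would use the distinct-marked-points clause of Corollary~\ref{cor:link-of-A2tropm}. Fixing one marked point $x_0$ on each surface, the remaining marked points can be slid to $x_0$ along disjoint arcs in the surface. This is a homotopy of attaching maps, and homotopic attaching maps give homotopy equivalent spaces. After this step, each surface is attached to the graph $G'$ at the single point $x_0$. Here $G'$ is the graph obtained from $G_m$ by replacing the vertex $(P,[\omega])$ with the ``vertex'' $x_0$; equivalently, $G'$ has the same homotopy type as $G_m$. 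Hence $X\simeq G_m\vee\bigvee_{\pi_m}\Sigma_{h_m}$, with the surfaces wedged at various vertices. Because $G_m$ is connected, we may slide all wedge points to a single basepoint, giving~\eqref{eq:homotopy-equivalence} with $\bigvee_{\beta_1(G_m)}S^1\simeq G_m$.

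Connectivity of $G_m$ is needed here. It follows from connectivity of $\Delta_2[m]$, and hence of $A_2[m]^\trop$, which is a colimit over the connected category $\cW$ with the common vertex $W=0$. Alternatively, connectivity of $\Delta_2[m]$ can be seen directly from transitivity of $\Sp_4(\ZZ/m\ZZ)$ on primitive vectors.

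Finally, compute $\beta_1(G_m)=|E|-|V|+1$. By Corollary~\ref{cor:counts-simplices-A2m}, $|\mathcal{V}_m|=\pi_m$, and by Corollary~\ref{cor:link-of-A2tropm}, $|\cP_m|=\pi_m$. Each $(P,[\omega])$ is incident to the $c_{1,m}$ classes $[v]$ with $v\in P$ primitive. Thus $|E|=\pi_m c_{1,m}$, and $\beta_1=\pi_m c_{1,m}-2\pi_m+1=\pi_m(c_{1,m}-2)+1$. The main obstacle is the careful justification of the homotopy equivalence $LA_2[m]^\trop\simeq X$, in particular that collapsing the stars is legitimate. I would handle it with the standard lemma that collapsing a contractible subcomplex of a CW complex is a homotopy equivalence, applied to each star simultaneously. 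The connectivity of $G_m$ is the other point requiring an explicit argument.
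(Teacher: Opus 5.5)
\textbf{The gap is the connectivity of $G_m$.} Your topological reduction is sound, and it spells out what the paper treats as immediate from Corollary~\ref{cor:link-of-A2tropm}. The double mapping cylinder $X$, the collapse of disjoint contractible trees, and the two rounds of sliding attaching points (first within each connected surface, then within the graph) are all legitimate. Two small repairs are needed in the description:
\begin{itemize}
\item The edge for the pair $([v],(P,[\omega]))$ should run from the vertex $[v]\in\mathcal{V}_m$ to the copy of $[v]$ on $S(P,[\omega])$. The vertex $(P,[\omega])$ only appears after you slide the marked points to $x_0$.
\item The star you collapse must consist of these full edges, endpoints included. With half-edges, the copies of $[v]$ are not identified.
\end{itemize}

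Connectivity of $G_m$ is the one substantive input the paper's proof supplies beyond the corollary. Both the wedge decomposition and the formula $\beta_1=|E|-|V|+1$ depend on it. Reducing from $G_m$ to $\Delta_2[m]$ is fine, but neither of your arguments for connectivity of $\Delta_2[m]$ works.
\begin{itemize}
\item $A_2[m]^\trop$ is a cone whose apex is the image of $\Sigma(\{0\})$, so it is connected for trivial reasons. $LA_2[m]^\trop$ is obtained by deleting exactly that apex; indeed $L\Sigma(\{0\})=\emptyset$. The ``common vertex $W=0$'' is precisely what disappears, so connectivity of $A_2[m]^\trop$ says nothing about $\Delta_2[m]$.
\item Vertex-transitivity of $\Sp_4(\ZZ/m\ZZ)$ only shows that all components are isomorphic. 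A disjoint union of copies of a vertex-transitive complex is still vertex-transitive.
\end{itemize}

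What you need is connectivity of the incidence structure of \emph{nonzero} isotropic subspaces. The paper gets this as follows. $G_m$ is the quotient by $\Gamma[m]$ of the incidence graph of nonzero isotropic subspaces of $\ZZ^4$: vertices surject by Lemma~\ref{lemma:isotropic-basis-lifts} and \eqref{eq:explicit-bijection}, and inclusions map to edges. So it suffices to show that incidence graph is connected. Take two isotropic lines $\ell_1,\ell_2$.
\begin{itemize}
\item If $J(\ell_1,\ell_2)=0$, then the saturation of $\ell_1+\ell_2$ is an isotropic plane containing both.
\item Otherwise $\ell_1^\perp\cap\ell_2^\perp$ has rank $2$. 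Any line $\ell_3$ in it gives isotropic planes $\ell_1+\ell_3$ and $\ell_2+\ell_3$ joining $\ell_1$ to $\ell_2$.
\end{itemize}
Working over $\ZZ$ rather than directly in $(\ZZ/m\ZZ)^4$ avoids having to check, for composite $m$, that the relevant spans are free direct summands.
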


\begin{proof}
    The homotopy equivalence~\eqref{eq:homotopy-equivalence} follows from Corollary~\ref{cor:link-of-A2tropm} once we prove $G_{m}$ is connected. Note that $G_{m}$ is the quotient of the incidence graph of non-trivial isotropic subspaces of $\Z^4$ (with vertices corresponding to isotropic lines and planes, and edges given by inclusion of subspaces) by the natural action of $\Gamma[m]$. So it is enough to check that the incidence graph is connected before quotienting by $\Gamma[m]$. To see this, suppose that $\ell_1$ and $\ell_2$ are two lines in $\ZZ^4$. If $\ell_1$ and $\ell_2$ are orthogonal with respect to the pairing $J$, then $\ell_1+\ell_2 \subset \ZZ^4$ is an isotropic plane containing both $\ell_1$ and $\ell_2$. Otherwise, $\ell_1^\perp \cap \ell_2^{\perp}$ has rank $2$, so we may choose a line $\ell_3 \subset \ell_1^\perp \cap \ell_2^{\perp}$. Then, $\ell_1+\ell_3$ and $\ell_2+\ell_3$ are both isotropic planes, the first containing $\ell_1$ and the second containing $\ell_2$. This shows that $G_{m}$ is connected. 
    
    Finally, 
    the expression for $\beta_1(G_m)$ follows because, by Corollary~\ref{cor:link-of-A2tropm}, the graph $G_{m}$ has $c_{1,m}\pi_m$ edges and $2\pi_{m}$ vertices.
\end{proof}

\begin{remark}\label{rem:steinberg}
    When $m$ is prime, the description of $LA_{2}[m]^{\trop}$ from Theorem~\ref{thm:homotopy} gives a relation between the weight zero compactly supported cohomology of $\cA_{2}[m]$ and the $\pm$-oriented symplectic Tits buildings and Steinberg modules as studied in~\cite{capovillaSearle26}. Since $G_{m}$ is homotopic to a wedge of $\beta_{1}(G_m)$ many circles, \eqref{eq:homotopy-equivalence} may also be phrased as
    
     \begin{equation} \label{eq:homotopy-type-graph}
         LA_{2}[m]^{\trop} \;\; \simeq \;\; \left(\bigvee_{i=1}^{\pi_m} \Sigma_{h_m} \right) \vee  \; G_{m}.
     \end{equation}

    As explained in the proof of Theorem~\ref{thm:homotopy}, the graph $G_m$ is simply the quotient of the rational symplectic Tits building $\cT_4^\omega(\Q)$ by the action of $\Gamma[m]$. When $m$ is prime, this quotient is precisely the $\pm$-oriented symplectic Tits building $\cT^{\omega,\pm}_4(\FF_m)$ defined in~\cite[Definition~3.14]{capovillaSearle26}. Furthermore, Capovilla-Searle defines the $\pm$-oriented symplectic Steinberg module to be $\St^{\omega,\pm}_{4}(\FF_m) \coloneqq \widetilde{H}_{1}(\cT_{4}^{\omega,\pm}(\FF_m); \ZZ)$, see Definition~3.19 in loc.\ cit. Therefore, the homotopy equivalence~\eqref{eq:homotopy-type-graph} shows that $(\St_{4}^{\omega,\pm}(\FF_m)\otimes_{\ZZ}\QQ)^\vee$ is a direct summand of $\widetilde{H}^1(LA_2[m]^\trop; \Q) \cong W_{0}H_c^{2}(\mathcal{A}_{2}[m]; \QQ)$.

    \end{remark}

\begin{theorem}
\label{thm:a2m}
    For $m > 2$, the weight zero compactly supported cohomology of $\cA_2[m]$ is
    \begin{align*}
         W_0 H^2_c(\cA_2[m]; \; \QQ) \; \cong \; \Q^{1+ \pi_{m}(c_{2,m} -c_{3,m})},\ \ \ 
     W_0 H^3_c(\cA_2[m]; \; \QQ) \; \cong \; \Q^{\pi_{m}},
    \end{align*}
    and is 0 in all other degrees. 
\end{theorem}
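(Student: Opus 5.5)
The plan is to combine the comparison isomorphism \eqref{eq:comparison for Agm} from \cite[Theorem 1.19]{ABBCV}, namely $W_0 H^i_c(\cA_2[m];\Q)\cong \tilde H^{i-1}(LA_2[m]^\trop;\Q)$, with the homotopy equivalence of Theorem~\ref{thm:homotopy}. Reduced cohomology of a wedge of connected CW complexes is the direct sum of the reduced cohomologies of the summands. Each $\Sigma_{h_m}$ contributes $\Q^{2h_m}$ in degree $1$ and $\Q$ in degree $2$, and each circle contributes $\Q$ in degree $1$. Hence
\[\tilde H^1(LA_2[m]^\trop;\Q)\cong \Q^{2h_m\pi_m+\beta_1(G_m)},\qquad \tilde H^2(LA_2[m]^\trop;\Q)\cong\Q^{\pi_m},\]
and all other reduced cohomology vanishes. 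In particular $\tilde H^0=0$ because the space is connected, and there is nothing in degree $\ge 3$ because the link is $2$-dimensional. Shifting degrees by one immediately gives $W_0H^3_c\cong\Q^{\pi_m}$, and $W_0 H^i_c = 0$ for $i\neq 2,3$.

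It remains to simplify the exponent in degree $2$. By Lemma~\ref{lem:genus}, $2h_m = 2-(c_{1,m}-c_{2,m}+c_{3,m})$, and by Theorem~\ref{thm:homotopy}, $\beta_1(G_m)=\pi_m(c_{1,m}-2)+1$. Adding these,
\[2h_m\pi_m+\beta_1(G_m)=\pi_m\bigl(2-c_{1,m}+c_{2,m}-c_{3,m}\bigr)+\pi_m(c_{1,m}-2)+1 = 1+\pi_m(c_{2,m}-c_{3,m}),\]
which is exactly the claimed exponent.

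There is no serious obstacle, since everything reduces to earlier results. The one point worth double-checking is the Euler characteristic in Lemma~\ref{lem:genus}: the triangulated surface has $c_{1,m}$ vertices, $c_{2,m}$ edges and $c_{3,m}$ triangles (Lemma~\ref{omegagcount}), so $\chi = c_{1,m}-c_{2,m}+c_{3,m} = 2-2h_m$, consistent with the stated formula for $h_m$. As an optional sanity check, one can compare with the Euler characteristic of $\Delta_2[m]$ from Corollary~\ref{cor:counts-simplices-A2m}. Since $\pi_m c_{2,m}$ and $\pi_m c_{3,m}$ match the edge and triangle counts there, and the vertex count is $\pi_m$, we get
\[\chi = \pi_m - \pi_m c_{2,m} + \pi_m c_{3,m} = 1 - \bigl(1+\pi_m(c_{2,m}-c_{3,m})\bigr) + \pi_m,\]
which agrees with the computed ranks.
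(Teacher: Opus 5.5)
Your proposal is correct and follows essentially the same route as the paper: apply the comparison isomorphism $W_0H^i_c(\cA_2[m];\Q)\cong\tilde H^{i-1}(LA_2[m]^\trop;\Q)$ to the wedge decomposition of Theorem~\ref{thm:homotopy}, then substitute $h_m=1-\frac12(c_{1,m}-c_{2,m}+c_{3,m})$ and $\beta_1(G_m)=\pi_m(c_{1,m}-2)+1$ to simplify the degree-$2$ exponent. Your Euler characteristic cross-check against Corollary~\ref{cor:counts-simplices-A2m} is a useful consistency check that the paper does not include.
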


\begin{proof}
We use the isomorphism~\eqref{eq:comparison for Agm} and 
compute the reduced cohomology of the link of $A_{2}^\trop[m]$ directly from the description in Theorem~\ref{thm:homotopy}:
\begin{align*}
    \widetilde{H}^1(LA_2[m]^\trop; \, \QQ) & \; \cong \; \widetilde{H}^1(\Sigma_{h_m}; \, \QQ)^{\oplus \pi_m} \oplus \widetilde{H}^1(S^1; \, \QQ)^{\oplus \beta_{1}(G_m)} 
    \; \cong \;  \QQ^{2h_m\pi_m + \beta_1(G_m)}, \\
    \widetilde{H}^2(LA_2[m]^\trop;\, \QQ) & \; \cong \; \widetilde{H}^2(\Sigma_{h_m}; \, \QQ)^{\oplus \pi_m} \; \cong \; \QQ^{\pi_m}.
\end{align*}
The claim now follows from $h_{m}=1-\frac{1}{2}(c_{1,m}-c_{2,m}+c_{3,m})$ and $\beta_1(G_{m}) = \pi_m(c_{1,m} - 2)+1$.
\end{proof}

\begin{remark} \label{rem:oda-schwermer}
    The weight zero compactly supported cohomology and the top-weight cohomology of $\cA_g[m]$ are Poincar\'e dual to each other. In \cite[Propositions~5.3 and~5.5]{Oda1990MixedHS}, Oda and Schwermer study the weight filtrations on $H^3(\cA_2[m]; \, \Q)$ and $H^4(\cA_2[m]; \, \Q)$. They give expressions for the top-weight pieces
    in line~(2) of each proposition. Theorem~\ref{thm:a2m} completes this calculation and gives 
    \begin{center}
     \begin{tabular}{rl}
         $\dim \Gr_{6}^W H^4(\cA_2[m]; \; \Q) \;$ & $ = \quad 1+ \frac{m^7}{24} \cdot \prod\nolimits_{p \mid m} (1 - p^{-2})(1 - p^{-4})$ \\
        and $\quad \dim \Gr_{6}^W H^3(\cA_2[m]; \; \Q) \;$ & $= \quad \frac{m^4}{2} \cdot \prod\nolimits_{p \mid m} (1 - p^{-4}).$
     \end{tabular} 
          \end{center}
\end{remark}

\bibliographystyle{alpha}
\bibliography{refs}

@unpublished{ABBCV,
	AUTHOR = {Assaf, Eran and Brandt, Madeline and Bruce, Juliette and Chan, Melody and Raluca Vlad},
	TITLE = {Tropicalizations of locally symmetric varieties},
	YEAR = {2025},
	NOTE = {arXiv:2505.10504},
}

@article{acp,
	author = {Abramovich, Dan and Caporaso, Lucia and Payne, Sam},
	fjournal = {Annales Scientifiques de l'\'{E}cole Normale Sup\'{e}rieure. Quatri\`eme S\'{e}rie},
	journal = {Ann. Sci. \'{E}c. Norm. Sup\'{e}r. (4)},
	number = {4},
	pages = {765--809},
	title = {The tropicalization of the moduli space of curves},
	url = {https://doi.org/10.24033/asens.2258},
	volume = {48},
	year = {2015},
}

@article{allcock-corey-payne-tropical,
	author = {Allcock, Daniel and Corey, Daniel and Payne, Sam},
	fjournal = {Bulletin of the London Mathematical Society},
	journal = {Bull. Lond. Math. Soc.},
	number = {1},
	pages = {193--205},
	title = {Tropical moduli spaces as symmetric {$\Delta$}-complexes},
	url = {https://doi.org/10.1112/blms.12570},
	volume = {54},
	year = {2022},
}

@book {amrt,
    AUTHOR = {Ash, Avner and Mumford, David and Rapoport, Michael and Tai, Yung-sheng},
     TITLE = {Smooth compactifications of locally symmetric varieties},
    SERIES = {Cambridge Mathematical Library},
   EDITION = {Second},
      NOTE = {With the collaboration of Peter Scholze},
 PUBLISHER = {Cambridge University Press, Cambridge},
      YEAR = {2010},
     PAGES = {x+230},
       URL = {https://doi.org/10.1017/CBO9780511674693},
}

@article{bbcmmw-top,
	author = {Brandt, Madeline and Bruce, Juliette and Chan, Melody and Melo, Margarida and Moreland, Gwyneth and Wolfe, Corey},
	fjournal = {Geometry \& Topology},
	journal = {Geom. Topol.},
	number = {2},
	pages = {497--538},
	title = {On the top-weight rational cohomology of {$\Cal A_g$}},
	url = {https://doi.org/10.2140/gt.2024.28.497},
	volume = {28},
	year = {2024},
}

@article{bmv,
	author = {Brannetti, Silvia and Melo, Margarida and Viviani, Filippo},
	fjournal = {Advances in Mathematics},
	journal = {Adv. Math.},
	number = {3},
	pages = {2546--2586},
	title = {On the tropical {T}orelli map},
	url = {http://dx.doi.org/10.1016/j.aim.2010.09.011},
	volume = {226},
	year = {2011},
}

@article{cchuw,
	author = {Cavalieri, Renzo and Chan, Melody and Ulirsch, Martin and Wise, Jonathan},
	journal = {Forum of Mathematics, Sigma},
	pages = {e23},
	publisher = {Cambridge University Press},
	title = {A moduli stack of tropical curves},
	volume = {8},
	year = {2020},
}

@unpublished{capovillaSearle26,
      title={On the top-degree cohomology groups of congruence subgroups of $\text{Sp}_{2n}(\mathbb{Z})$}, 
      author={Fabio Capovilla-Searle},
      year={2026},
      note ={arXiv:2605.29010},
      url={https://arxiv.org/abs/2605.29010}, 
}

@article{elbaz-vincent-gangl-soule-perfect,
	author = {Elbaz-Vincent, Philippe and Gangl, Herbert and Soul\'e, Christophe},
	fjournal = {Advances in Mathematics},
	journal = {Adv. Math.},
	pages = {587--624},
	title = {Perfect forms, {K}-theory and the cohomology of modular groups},
	url = {https://doi.org/10.1016/j.aim.2013.06.014},
	volume = {245},
	year = {2013},
}

@book{kkmsd,
	author = {Kempf, George and Knudsen, Finn Faye and Mumford, David and Saint-Donat, Bernard},
	pages = {viii+209},
	publisher = {Springer-Verlag, Berlin-New York},
	series = {Lecture Notes in Mathematics, Vol. 339},
	title = {Toroidal embeddings. {I}},
	year = {1973}
}

@unpublished{kolay,
author = {Kolay, Sudipta},
year = {2019},
title = {Subgroups of the mapping class group of the torus generated by powers of {D}ehn twists},
note = {arXiv:1909.07360}
}

@article{kannan-song-dual,
    AUTHOR = {Kannan, Siddarth and Song, Terry Dekun},
     TITLE = {The dual complex of {$\Cal{M}_{1,n}(\Bbb P^r,d)$} via the
              geometry of the {V}akil-{Z}inger moduli space},
   JOURNAL = {Adv. Math.},
  FJOURNAL = {Advances in Mathematics},
    VOLUME = {492},
      YEAR = {2026},
     PAGES = {Paper No. 110910, 41},
       URL = {https://doi.org/10.1016/j.aim.2026.110910},
}

@article{el-maazouz-helminck-roehrle-souza-yun-topology,
    AUTHOR = {Maazouz, Yassine El and Helminck, Paul Alexander and R\"ohrle,
              Felix and Souza, Pedro and Yun, Claudia He},
     TITLE = {On the topology of the moduli of tropical unramified
              {$p$}-covers},
   JOURNAL = {Selecta Math. (N.S.)},
  FJOURNAL = {Selecta Mathematica. New Series},
    VOLUME = {31},
      YEAR = {2025},
    NUMBER = {1},
     PAGES = {Paper No. 14, 42},
       URL = {https://doi.org/10.1007/s00029-024-01007-4},
}

@incollection {MZ08,
    AUTHOR = {Mikhalkin, Grigory and Zharkov, Ilia},
     TITLE = {Tropical curves, their {J}acobians and theta functions},
 BOOKTITLE = {Curves and abelian varieties},
    SERIES = {Contemp. Math.},
    VOLUME = {465},
     PAGES = {203--230},
 PUBLISHER = {Amer. Math. Soc., Providence, RI},
      YEAR = {2008},
      ISBN = {978-0-8218-4334-5},
   MRCLASS = {14T05 (05C38 14H40 14H42)},
  MRNUMBER = {2457739},
       DOI = {10.1090/conm/465/09104},
       URL = {https://doi.org/10.1090/conm/465/09104},
}

@article {newman-smart,
    AUTHOR = {Newman, Morris and Smart, John Roderick},
     TITLE = {Symplectic modulary groups},
   JOURNAL = {Acta Arith.},
  FJOURNAL = {Polska Akademia Nauk. Instytut Matematyczny. Acta Arithmetica},
    VOLUME = {9},
      YEAR = {1964},
     PAGES = {83--89},
       URL = {https://doi.org/10.4064/aa-9-1-83-89},
}

@article{Oda1990MixedHS,
  title={Mixed {H}odge structures and automorphic forms for {S}iegel modular varieties of degree two},
  author={Takayuki Oda and Joachim Schwermer},
  journal={Mathematische Annalen},
  year={1990},
  volume={286},
  pages={481-509},
  url={https://api.semanticscholar.org/CorpusID:121623037}
}

@book {namikawa,
    AUTHOR = {Namikawa, Yukihiko},
     TITLE = {Toroidal compactification of {S}iegel spaces},
    SERIES = {Lecture Notes in Mathematics},
    VOLUME = {812},
 PUBLISHER = {Springer, Berlin},
      YEAR = {1980},
     PAGES = {viii+162},
}

\end{document}